\documentclass[reqno]{amsart}
\usepackage{amsmath}
\usepackage{amsthm, amscd, amssymb, amsfonts, amsbsy}
\usepackage[usenames, dvipsnames]{color}
\usepackage{enumerate}
\usepackage{hyperref}
\usepackage{mathrsfs}
\usepackage{verbatim}
\usepackage{todonotes}

\numberwithin{equation}{section}

\theoremstyle{plain}
\newtheorem{theorem}{Theorem}[section]
\newtheorem{lemma}[theorem]{Lemma}
\newtheorem{corollary}[theorem]{Corollary}
\newtheorem{proposition}[theorem]{Proposition}

\theoremstyle{definition}
\newtheorem{definition}[theorem]{Definition}
\newtheorem{assumption}[theorem]{Assumption}

\theoremstyle{remark}
\newtheorem{remark}[theorem]{Remark}

\makeatletter
\def\dashint{\operatorname%
{\,\,\text{\bf--}\kern-.98em\DOTSI\intop\ilimits@\!\!}}
\makeatother

\def\bR{\mathbb{R}}

\def\cB{\mathcal{B}}

\def\cD{\mathcal{D}}
\def\cL{\mathcal{L}}
\def\cN{\mathcal{N}}

\begin{document}
\title[Green functions in two dimensions]{Green functions of mixed boundary value problems for stationary Stokes systems in two dimensions}

\author[J. Choi]{Jongkeun Choi}
\address[J. Choi]{
Department of Mathematics Education, Pusan National University,  Busan, 46241, Republic of Korea}

\email{jongkeun\_choi@pusan.ac.kr}

\thanks{J. Choi was supported by the National Research Foundation of Korea(NRF) under agreement NRF-2022R1F1A1074461}

\author[M. Yang]{Minsuk Yang}
\address[M. Yang]{Department of Mathematics, Yonsei University, Seoul 03722, Republic of Korea}
\email{m.yang@yonsei.ac.kr}

\thanks{M. Yang was supported by the National Research Foundation of Korea(NRF) under agreement NRF-2021R1A2C4002840}

\subjclass[2010]{35J08, 35J57, 76D07}
\keywords{Green function, Stokes system, mixed boundary value problem, measurable coefficients, Reifenberg flat domain}

\begin{abstract}
We establish the existence, uniqueness, and various estimates for Green functions of mixed Dirichlet-conormal derivative problems for the stationary Stokes system with measurable coefficients in a two-dimensional Reifenberg flat domain with a rough separation. 
\end{abstract}

\maketitle


\section{Introduction}	

Let $\Omega$ be a bounded domain in $\bR^2$.
The boundary of $\Omega$, denoted by $\partial \Omega$, is divided into two open components $\cD$ and $\cN$ separated by $\Gamma$.
That is,
$$
\partial \Omega=\overline{\cD}\cup \cN, \quad \cD\cap \cN=\emptyset, \quad \Gamma=\overline{\cD}\cap \overline{\cN}.
$$
We consider mixed Dirichlet-conormal derivative problems for the stationary Stokes system with variable coefficients
\begin{equation}		\label{230216_eq1}
\begin{cases}
\operatorname{div} u=g &\text{in }\, \Omega,\\
\cL u+\nabla p=f+D_\alpha f^\alpha &\text{in }\, \Omega,\\
\cB u+p \nu=\nu_\alpha f^\alpha  & \text{on }\, \cN,\\
u=0 & \text{on }\, \cD.
\end{cases}
\end{equation}
The elliptic operator $\cL$ and its associated conormal derivative operator $\cB$ are given by 
$$
\cL u=D_\alpha(A^{\alpha\beta}D_\beta u), \quad \cB u=\nu_\alpha A^{\alpha\beta} D_\beta u,
$$
where the Einstein summation convention on repeated indices is used.
The coefficients $A^{\alpha\beta}$, as functions of $x\in \bR^2$, need only be measurable satisfying the strong ellipticity condition.
The Stokes system with such irregular coefficients arises naturally in mathematical fluid dynamics, particularly when dealing with inhomogeneous fluids with density-dependent viscosity and Stokes flow over composite materials.
See, for instance,  \cite{MR2663713, MR4440019, MR0425391, MR1422251, MN1987}.
Also, the system with  the mixed boundary condition is connected to the study of incompressible medium or fluid flow on an artificial boundary, such as a canal exit. 
For applications, see \cite{MR1676350, 2005Kohr, MR2182091} and references therein.

In this paper, we are concerned with a Green function (for the flow velocity) of the mixed problem \eqref{230216_eq1}, which is defined by a pair $(G, \Pi)$ satisfying
$$
\begin{cases}
\operatorname{div} G(\cdot,y)=0 &\text{in }\, \Omega,\\
\cL G(\cdot,y)+\nabla \Pi(\cdot,y)=-\delta_y I &\text{in }\, \Omega,\\
\cB G(\cdot,y)+\nu\Pi(\cdot, y) =0 & \text{on }\, \cN,\\
u=0 & \text{on }\, \cD.
\end{cases}
$$
See Definition \ref{D1} for a precise definition of the Green function. 
We prove that the Green function $(G, \Pi)$ exists and has the logarithmic pointwise bound
$$
|G(x,y)|\le C\log \bigg(\frac{\operatorname{diam}(\Omega)}{|x-y|}\bigg)+C
$$
in a Reifenberg flat domain $\Omega\subset\bR^2$, under the assumption that the Dirichlet part $\cD$ accounts for a certain proportion near the separation $\Gamma$ at every scale.
We also establish the representation formula
$$
u(y)=-\int_\Omega G(y, \cdot) f\,dx+\int_\Omega D_\alpha G(y, \cdot) f^\alpha \,dx
$$
whenever $(u,p)$ is a weak solution of \eqref{230216_eq1} with $g\equiv 0$.
For further details, see Theorem \ref{M1} and Remark \ref{230216_rmk2}.
Note that Lipschitz domains with small Lipschitz constants are Reifenberg flat.
Hence, our main theorem (Theorem \ref{M1}) holds for all Lipschitz domains with small Lipschitz constants.
In fact, as far as the Lipschitz domains are concerned, our argument is irrelevant to the flatness of the Lipschitz boundary.
Thus, the theorem still holds for all Lipschitz domains with a bounded Lipschitz constant.
Moreover, in this case, the assumption on $\cD$ can be relaxed to the so-called Ahlfors-David condition, which means the surface measure of $\cD\cap B_R(x_0)$ is comparable to $R$ when $x\in \cD$.
See Remark \ref{230216_rmk3} for more details.

Green functions for the Stokes system have been studied extensively in the literature.
With respect to the classical Stokes system 
$$
\Delta u+\nabla p=f,
$$
we refer the reader to \cite{MR2465713, MR0975121, MR2718661, MR0254401, MR0725151, MR0734895, MR2763343}  and references therein.
In these papers, the existence and estimates for fundamental solutions and Green functions of pure Dirichlet/Neumann problems were studied.
We also refer to \cite{MR2182091, MR3320459} for Green functions of  mixed  Dirichlet-Neumann problems.
In \cite{MR2182091}, Maz'ya and Rossmann established pointwise estimates for the Green function and its derivatives in a 3D polyhedral cone, on faces of which 
the different boundary conditions are prescribed. 
In \cite{MR3320459}, Ott, Kim, and Brown constructed the Green function having a logarithmic pointwise bound in  a 2D Lipschitz domain with the Ahlfors-David condition.
Their argument relies on the solvability of the system with data lying in the dual of Lorentz-Sobolev spaces, and as commented in that paper, it may be applicable to a more general setting with irregular coefficients and domains.

Recently,  in  \cite{MR3906316,CK2023} the Green functions for the Stokes system with measurable coefficients  in 2D domains were studied.
The authors in \cite{MR3906316} established the existence and pointwise estimates for the Green function of the pure Dirichlet problem  in John domains. 
They also discussed the global pointwise bounds of the derivatives under regularity assumptions on the coefficients and the boundary of the domain.
The corresponding results for the pure conormal derivative problem were achieved in \cite{CK2023}.
See also \cite{MR3877495, MR3693868, MR3959936} for earlier works on three or higher-dimensional Green functions of pure boundary value problems with BMO (bounded mean oscillations) or periodically oscillating coefficients.

To prove the main result of this paper, we employ the approach used in \cite{CK2023}, differing from the methodology outlined in  \cite{MR3320459}.
The key ingredient lies in establishing uniform estimates for a family of approximated Green functions, relying  on $L^q$-estimates, where $q$ is  close to $2$, and Sobolev embeddings.
In this paper, we refine the approach for the $L^q$-estimates and embeddings since dealing with the mixed Dirichlet-conormal boundary condition makes the argument more involved.

The remainder of this paper is  organized as follows.
In Section \ref{S2}, we fix our notation, introduce assumptions on the domain, and then state our main results.
We provide some auxiliary results in Section \ref{S3} and prove the main theorem in Section \ref{S4}.

\section{Preliminaries and main results}	\label{S2}

We first fix some notation and standard definitions used throughout the paper.
Let $\Omega$ be a bounded domain in $\bR^2$.
For any $x\in \bR^2$ and $R>0$, we write $\Omega_R(x)=\Omega\cap B_R(x)$, where $B_R(x)$ is the usual Euclidean disk of radius $R$ centered at $x$.
For a function $u$ in $\Omega$, we set 
$$
\|u\|_{L^{2, \infty}(\Omega)}=\sup_{t>0} t\big|\{x\in \Omega: |u(x)|>t\}\big|^{1/2}
$$
and 
$$
(u)_{\Omega}=\dashint_{\Omega} u\,dx=\frac{1}{|\Omega|}\int_\Omega u\,dx,
$$
where $|\Omega|$ is the $2$-dimensional Lebesgue measure of $\Omega$.
For $q\in [1, \infty]$, we denote by $W^{1,q}(\Omega)$ the usual Sobolev space and by $W^{1,q}_{\cD}(\Omega)$  the completion of $C^\infty_{\cD}(\Omega)$ in $W^{1, q}(\Omega)$, where $C^\infty_{\cD}(\Omega)$ is the set of all infinitely differentiable functions in $\bR^2$ having a compact support in $\overline{\Omega}$ and vanishing in a neighborhood of $\overline{\cD}$. 
Similarly, we define $W^{1,q}_{\cN}(\Omega)$.

By, for instance, $u\in W^{1,q}(\Omega)^2$ we mean that $u=(u_1, u_2)^{\top}$ and $u_1,u_2\in W^{1,q}(\Omega)$.
As a superscript, we also use $2\times 2$ (resp. $1\times 2$) in place of $2$ to denote a space for $2\times 2$ (resp. $1\times 2$) matrix-valued functions.

\subsection{Mixed boundary value problem and Green function}	\label{S2_1}

Let $\cL$ be a strongly elliptic operator of the form 
$$
\cL u=D_\alpha (A^{\alpha\beta} D_\beta u),
$$
where the coefficients $A^{\alpha\beta}=A^{\alpha\beta}(x)$ are $2\times 2$ matrix-valued functions in $\bR^2$ satisfying the strong ellipticity condition,
that is, there exists $\lambda\in (0, 1]$ such  
$$
|A^{\alpha\beta}|\le \lambda^{-1}, \quad \lambda  |\xi|^2\le A^{\alpha\beta}_{ij} \xi^\beta_j\xi^\alpha_i
$$
for any $\xi=(\xi^\alpha_i)\in \bR^{2\times 2}$.
We denote by $\cB u=\nu_\alpha A^{\alpha\beta} D_\beta u$ the conormal derivative of $u$ on $\partial\Omega$ associated with $\cL$, where $\nu=(\nu_1, \nu_2)^{\top}$ is the outward unit normal to $\partial \Omega$.
The adjoint operator $\cL^*$ and its associated conormal derivative operator $\cB^*$ are defined by 
$$
\cL^* u=D_\alpha \big((A^{\beta\alpha})^{\top} D_\beta u\big), \quad \cB^* u=\nu_\alpha (A^{\beta\alpha})^{\top} D_\beta u.
$$
Let $q,q_1\in (1, \infty)$ with $q_1\ge 2q/(2+q)$.
For $f\in L^{q_1}(\Omega)^2$, $f^\alpha\in L^q(\Omega)^2$, and $g\in L^q(\Omega)$, we say that $(u, p)\in W^{1,q}_{\cD}(\Omega)^2\times L^q(\Omega)$ is a weak solution of the problem 
$$
\begin{cases}
\operatorname{div} u=g &\text{in }\, \Omega,\\
\cL u+\nabla p=f+D_\alpha f^\alpha &\text{in }\, \Omega,\\
\cB u+p \nu=\nu_\alpha f^\alpha  & \text{on }\, \cN,\\
u=0 & \text{on }\, \cD,
\end{cases}
$$
if $\operatorname{div} u=g$ a.e. in $\Omega$ and 
$$
\int_\Omega A^{\alpha\beta} D_\beta u\cdot D_\alpha \varphi\,dx+\int_\Omega p \operatorname{div} \varphi\,dx=-\int_\Omega f\cdot \varphi\,dx+\int_\Omega f^\alpha \cdot D_\alpha \varphi\,dx
$$
holds for any $\varphi\in C^\infty_{\cD}(\Omega)^2$.
Similarly, we say that $(u, p)\in W^{1,q}_{\cD}(\Omega)^2\times L^q(\Omega)$ is a weak solution of the adjoint problem 
$$
\begin{cases}
\operatorname{div} u=g &\text{in }\, \Omega,\\
\cL^* u+\nabla p=f+D_\alpha f^\alpha &\text{in }\, \Omega,\\
\cB^* u+p \nu=\nu_\alpha f^\alpha  & \text{on }\, \cN,\\
u=0 & \text{on }\, \cD,
\end{cases}
$$
if $\operatorname{div} u=g$ a.e. in $\Omega$ and 
$$
\int_\Omega A^{\alpha\beta} D_\beta \varphi\cdot D_\alpha u\,dx+\int_\Omega p \operatorname{div} \varphi\,dx=-\int_\Omega f\cdot \varphi\,dx+\int_\Omega f^\alpha \cdot D_\alpha \varphi\,dx
$$
holds for any $\varphi\in C^\infty_{\cD}(\Omega)^2$.

We remark that even when $\Omega$ is irregular so that neither the outer normal nor the trace of a function in $W^{1,q}(\Omega)$ on the boundary is defined, the weak formulations above make sense because no boundary terms appear there.

The following is the definition of the Green function for the Stokes system.
Let $I$ be the $2\times 2$ identity matrix and $\delta_y$ is the Dirac delta function concentrated at $y$.

\begin{definition}		\label{D1}
We say that a pair $(G, \Pi)$ is a Green function for $\cL$ in $\Omega$ if it satisfies the following properties.
\begin{enumerate}[$(i)$]
\item
For any $y\in \Omega$, 
$$
G(\cdot, y)\in W^{1,1}_{\cD}(\Omega)^{2\times 2}, \quad \Pi(\cdot,y)\in L^1(\Omega)^{1\times2}.
$$
\item
For any $y\in \Omega$, $(G(\cdot,y), \Pi(\cdot,y))$ satisfies 
\begin{equation}		\label{230205_eq1}
\begin{cases}
\operatorname{div} G(\cdot,y)=0 &\text{in }\, \Omega,\\
\cL G(\cdot,y)+\nabla \Pi(\cdot,y)=-\delta_y I &\text{in }\, \Omega,\\
\cB G(\cdot,y)+\nu\Pi(\cdot, y) =0 & \text{on }\, \cN,\\
u=0 & \text{on }\, \cD,
\end{cases}
\end{equation}
in the sense that we have 
$$
\operatorname{div} G_{\cdot k}(\cdot,y)=0 \, \text{ a.e. in }\, \Omega
$$
and 
$$
\int_\Omega  A^{\alpha\beta}D_\beta G_{\cdot k}(\cdot, y)\cdot D_\alpha \varphi\,dx+\int_\Omega \Pi_k(\cdot,y)\operatorname{div}\varphi\,dx=\varphi_k(y)
$$
for any $k\in \{1,2\}$ and $\varphi\in W^{1, \infty}_{\cD}(\Omega)^2\cap C(\Omega)^2$, where $G_{\cdot k}(\cdot, y)$ is the $k$th column of $G(\cdot, y)$.
\item
If $(u, p)\in W^{1,2}_{\cD}(\Omega)^2\times L^2(\Omega)$ is a weak solution of the adjoint problem
\begin{equation}		\label{230209_eq4}
\begin{cases}
\operatorname{div} u=g &\text{in }\, \Omega,\\
\cL^* u+\nabla p=f+D_\alpha f^\alpha &\text{in }\, \Omega,\\
\cB^* u+p\nu =\nu_\alpha f^\alpha  & \text{on }\, \cN,\\
u=0 & \text{on }\, \cD,
\end{cases}
\end{equation}
where $f, f^\alpha\in L^\infty(\Omega)^2$, and $g\in L^\infty(\Omega)$, then for a.e. $y\in \Omega$, we have 
$$
u(y)=-\int_\Omega G(\cdot, y)^{\top} f\,dx+\int_\Omega D_\alpha G(\cdot, y)^\top f^\alpha \,dx+\int_\Omega \Pi(\cdot,y)^{\top}g\,dx,
$$
where $G(\cdot,y)^{\top}$ and $\Pi(\cdot,y)^{\top}$ are the transposes of $G(\cdot ,y)$ and $\Pi(\cdot,y)$.
\end{enumerate}
The Green function for the adjoint operator $\cL^*$ is defined similarly.
\end{definition}

We remark that, under Assumptions \ref{A1} and \ref{A2} below, the property $(iii)$ in Definition \ref{D1} and the solvability result in  Lemma \ref{230127_lem2} give 
the uniqueness of a Green function in the sense that if $(\tilde{G}, \tilde{\Pi})$ is another Green function satisfying the properties in Definition \ref{D1}, then for any $\phi\in L^\infty(\Omega)^2$ and $\varphi\in L^\infty(\Omega)$, we have 
$$
\int_\Omega \big(G(\cdot, y)^{\top}-\tilde{G}(\cdot, y)^\top\big) \phi\,dx=\int_{\Omega} \big(\Pi(\cdot, y)^{\top}-\tilde{\Pi}(\cdot,y)^{\top}\big)\varphi\,dx=0
$$
for a.e. $y\in \Omega$.

\subsection{Main result}	\label{S2_2}


Throughout the paper, we assume that $\Omega$ is a Reifenberg flat domain in the following sense.
Let $R_0>0$ be a fixed  constant.

\begin{assumption}		\label{A1}
Let $\gamma\in [0,1/96]$.
For any $x_0\in \partial \Omega$ and $R\in (0, R_0]$, there exists a coordinate system depending on $x_0$ and $R$ such that in this coordinate system (called the coordinate system associated with $(x_0, R)$), we have 
$$
\{y:x_{0}^1+\gamma R<y^1\}\cap B_R(x_0) \subset \Omega_R(x_0)\subset \{y: x_{0}^1-\gamma R<y^1\} \cap B_R(x_0),
$$
where $x_{0}^1$ is the first coordinate of $x_0$ in the coordinate system.
\end{assumption}

We also impose the following assumption on the components $\cD$ and $\cN$ separated by $\Gamma$.

\begin{assumption}		\label{A2}
Let $\kappa\in (0,1)$.

\begin{enumerate}[$(i)$]
\item
There exists $y_0\in \partial \Omega$ such that 
$$
\big(\partial \Omega\cap B_{\kappa R_0}(y_0)\big)\subset \cN.
$$
\item
For any $x_0\in \Gamma$ and $R\in (0, R_0]$, there exists $z_0\in \cD\cap B_{R}(x_0)$ such that 
$$
B_{\kappa R}(z_0)\subset B_R(x_0), \quad \big(\partial \Omega \cap B_{\kappa R}(z_0)\big)\subset  \cD.
$$
\end{enumerate}
\end{assumption}

%

The main result of the paper reads as follows.

\begin{theorem}		\label{M1}
Let $\Omega$ be a bounded Reifenberg flat domain in $\bR^2$  satisfying Assumptions \ref{A1} and \ref{A2}. 
Then there exist Green functions $(G, \Pi)$ and $(G^*, \Pi^*)$ for $\cL$ and $\cL^*$, respectively, such that
\begin{equation}		\label{230209_eq5e}
G(x,y)=G^*(y,x)^\top
\end{equation}
and that  
\begin{equation}		\label{230209_eq5d}
|G(x,y)|\le C\log \bigg(\frac{\operatorname{diam}(\Omega)}{|x-y|}\bigg)+C
\end{equation}
for all $x,y\in \Omega$ with $x\neq y$.
Moreover, the following estimates hold.
\begin{enumerate}[$(i)$]
\item
For any $x,y\in \Omega$ and $R\in (0, R_0]$, we have 
\begin{equation}		\label{230209_eq5}
\|DG(\cdot,y)\|_{L^q(\Omega_R(x))}+\|\Pi(\cdot, y)\|_{L^q(\Omega_R(x))}\le C_q R^{-1+2/q},
\end{equation}
where $1\le q<2$.
\item
There exists $q_0>2$ such that for any $y\in \Omega$ and $R\in (0, R_0]$, we have 
\begin{equation}		\label{230209_eq5a}
\|DG(\cdot,y)\|_{L^{q}(\Omega\setminus \overline{B_R(y)})}+\|\Pi(\cdot,y)\|_{L^{q}(\Omega\setminus \overline{B_R(y)})}\le C_qR^{-1+2/q},
\end{equation}
where $2<q\le q_0$.
Moreover, for any $x\in \overline{\Omega}$ satisfying $|x-y|\ge R$, we have 
\begin{equation}		\label{230209_eq5b}
[G(\cdot,y)]_{C^{1-2/q}(\Omega_{R/16}(x))}\le C_q R^{-1+2/q}.
\end{equation}
\item
For any $y\in \Omega$, we have 
\begin{equation}		\label{230209_eq5c}
\|DG(\cdot,y)\|_{L^{2, \infty}(\Omega)}+\|\Pi(\cdot,y)\|_{L^{2, \infty}(\Omega)}\le C.
\end{equation}
\end{enumerate}
In the above, $(C, q_0)=(C, q_0)(\lambda, R_0, \kappa, \operatorname{diam}(\Omega))$, and $C_q$ depends also on $q$.
\end{theorem}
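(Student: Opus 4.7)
The plan is to follow the approximation scheme of \cite{CK2023}, adapting it to the mixed boundary setting. First I construct approximated Green functions $(G_\epsilon(\cdot,y),\Pi_\epsilon(\cdot,y))\in W^{1,2}_\cD(\Omega)^{2\times 2}\times L^2(\Omega)^{1\times 2}$ by solving the mixed problem with right-hand side $-\delta_y^\epsilon I$, where $\delta_y^\epsilon$ is an $L^2$-approximation of the Dirac mass at $y$ (for instance a normalized indicator of $\Omega_\epsilon(y)$). Existence in the energy class follows from Lax--Milgram together with a de Rham construction of $\Pi_\epsilon$, with Assumption \ref{A2} providing the Poincar\'e inequality on $W^{1,2}_\cD(\Omega)$ needed for coercivity. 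The aim is then to derive $\epsilon$-uniform versions of \eqref{230209_eq5}--\eqref{230209_eq5c}, extract a weak limit, and verify Definition \ref{D1}.

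The heart of the argument is a $W^{1,q}$-solvability result for the mixed Stokes problem and its adjoint with $q$ in a symmetric window $(q_0',q_0)$ around $2$. I would combine the $L^2$-theory with a Gehring-type self-improvement, using Assumption \ref{A1} to locally flatten the boundary and Assumption \ref{A2} to secure uniform reverse H\"older inequalities on $\cD$, on $\cN$, and across $\Gamma$. With $W^{1,q'}$-solvability of the adjoint in hand, \eqref{230209_eq5} follows by duality: testing $G_\epsilon$ against the $W^{1,q'}_\cD$-solution produced by $L^{q'}$-data supported in $\Omega_R(x)$ yields the $L^q$-control of $(DG_\epsilon,\Pi_\epsilon)$ on $\Omega_R(x)$, uniformly in $\epsilon$. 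Estimate \eqref{230209_eq5a} for $q\in(2,q_0]$ comes from the observation that, outside $B_R(y)$, $(G_\epsilon,\Pi_\epsilon)$ weakly solves the homogeneous mixed Stokes system, so Caccioppoli-type bounds and reverse H\"older inequalities at dyadic annuli up to the boundary give the higher integrability; Morrey's embedding then delivers \eqref{230209_eq5b}, and interpolation (or a layer-cake argument) between \eqref{230209_eq5} and \eqref{230209_eq5a} gives the weak-$L^2$ estimate \eqref{230209_eq5c}.

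With these uniform bounds in hand, the pointwise logarithmic estimate \eqref{230209_eq5d} is obtained by iterating \eqref{230209_eq5b} across the roughly $\log_2(\operatorname{diam}(\Omega)/|x-y|)$ dyadic scales separating $|x-y|$ from $\operatorname{diam}(\Omega)$, which in dimension two produces exactly the logarithm; Assumption \ref{A2}(i) furnishes a fixed boundary region on which a reference value of $G_\epsilon$ can be controlled. A diagonal weak-limit argument along $\epsilon\to 0$ then produces $(G,\Pi)$ satisfying Definition \ref{D1}(ii), where convergence of the right-hand side $\int \delta_y^\epsilon\,\varphi_k\,dx\to\varphi_k(y)$ uses continuity of the test function at $y$. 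The symmetry \eqref{230209_eq5e} and property (iii) are produced simultaneously by substituting $G^*_{\epsilon'}(\cdot,x)$ into the weak formulation for $G_\epsilon(\cdot,y)$ and vice versa, then sending $\epsilon,\epsilon'\to 0$ using the uniform bounds.

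The principal obstacle I anticipate is the $W^{1,q}$-solvability of the mixed problem in an open neighborhood of $q=2$. In contrast with the pure Dirichlet or pure Neumann case, the mixed condition creates corner-type singularities along $\Gamma$ that could in principle collapse the solvability window to $q=2$ alone; Assumption \ref{A2}(ii), which forces $\cD$ to occupy a definite fraction of every small ball near $\Gamma$, is exactly what prevents this, enabling reverse H\"older bounds that are uniform across the separation. Reconciling these estimates simultaneously with the Reifenberg flatness of $\partial\Omega$, the merely measurable coefficients $A^{\alpha\beta}$, and the pressure coupling $\Pi$ (which forces a careful choice of divergence-free test fields respecting the mixed boundary data) is the technical heart of the argument, and refining the $L^q$-theory from \cite{CK2023} for the mixed setting is, as the authors themselves emphasize, the key novelty of the paper.
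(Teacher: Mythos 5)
Your overall architecture coincides with the paper's: mollified approximations $(G^\varepsilon,\Pi^\varepsilon)$, a $W^{1,q}$-theory for the mixed problem in a window $[q_0',q_0]$ around $2$ obtained from a Caccioppoli-type estimate (Lemma \ref{230130_lem1}) plus Gehring and duality (Lemma \ref{230130_lem2}, Proposition \ref{230130_prop1}), $\varepsilon$-uniform estimates, weak limits, a chaining argument for the logarithmic bound, and mutual testing for the symmetry. You also correctly identify Assumption \ref{A2}$(ii)$ as the mechanism that keeps the solvability window open across $\Gamma$ (via the Sobolev--Poincar\'e inequality of Lemma \ref{240213_lem4}). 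Estimate \eqref{230209_eq5} by duality, the layer-cake derivation of \eqref{230209_eq5c}, and the limit/symmetry arguments all match the paper.

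The one step where you diverge, and where your plan as written has a gap, is \eqref{230209_eq5a}. You propose to apply Caccioppoli and reverse H\"older inequalities directly to $(G^\varepsilon,\Pi^\varepsilon)$ on dyadic annuli in $\Omega\setminus\overline{B_R(y)}$. In two dimensions this is circular if the Caccioppoli inequality is seeded with $\|G^\varepsilon\|_{L^2}$ or $\|G^\varepsilon\|_{L^\infty}$ on a larger annulus: the only available pointwise control of $G^\varepsilon$ is the logarithmic bound \eqref{230209_eq5d}, which is itself a consequence of \eqref{230209_eq5b}, hence of \eqref{230209_eq5a}; and the correctly scaled bound $\|DG^\varepsilon\|_{L^2(\text{annulus})}\le C$ is exactly at the weak-$L^2$ borderline, so it is not available a priori either. (This is precisely the obstruction that makes the $n=2$ case different from $n\ge 3$, where the weak-$L^{n/(n-2)}$ bound on $G$ breaks the circle.) The paper avoids this entirely by running assertion $(ii)$ of Lemma \ref{230209_lem1} through duality as well: one tests against the adjoint solution $(u,p)$ with data supported in $\Omega\setminus\overline{B_R(y)}$, observes that $(u,p)$ solves a \emph{homogeneous} system in $B_R(y)$, and applies the cutoff/Caccioppoli machinery to $\eta u$ near $y$ together with the global $L^{s'}$-estimates for $s'<2$. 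Your route can be repaired, but only by seeding the reverse H\"older inequality on each annulus with the subcritical bounds of part $(i)$ (which give $(|DG^\varepsilon|^s)^{1/s}_{\Omega_\rho(x)}\le C\rho^{-1}$ for $s<2$ and $\rho\sim|x-y|$) and by localizing Lemmas \ref{230130_lem1}--\ref{230130_lem2} away from the support of $\Phi^{\varepsilon,y}$; neither of these adjustments is indicated in your write-up, and without them the step does not close.
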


Related to the theorem above, we have a few remarks.

\begin{remark}		\label{230216_rmk2}
Let $(u, p)\in W^{1,2}_{\cD}(\Omega)^2\times L^2(\Omega)$ be a weak solution of the problem 
$$
\begin{cases}
\operatorname{div} u=0 &\text{in }\, \Omega,\\
\cL u+\nabla p=f+D_\alpha f^\alpha &\text{in }\, \Omega,\\
\cB u+p \nu=\nu_\alpha f^\alpha  & \text{on }\, \cN,\\
u=0 & \text{on }\, \cD,
\end{cases}
$$
where $f, f^\alpha\in L^\infty(\Omega)^2$.
Then by the counterpart of $(iii)$ in Definition \ref{D1} for $(G^*, \Pi^*)$, we have that for a.e. $y\in \Omega$, 
$$
u(y)=-\int_\Omega G^*(\cdot, y)^{\top} f\,dx+\int_\Omega D_\alpha G^*(\cdot, y)^\top f^\alpha \,dx.
$$
This along with \eqref{230209_eq5e} yields that  
$$
u(y)=-\int_\Omega G(y,\cdot )f\,dx+\int_\Omega D_\alpha G(y, \cdot) f^\alpha \,dx.
$$
\end{remark}

\begin{remark}		\label{230216_rmk3}
Note that Lipschitz domains whose Lipschitz constants are not greater than $1/96$ satisfy Assumption \ref{A1}.
Hence, our main theorem (Theorem \ref{M1}) holds for all such Lipschitz domains.
On the other hand, the embeddings and solvability results presented in Section \ref{S3} also hold for Lipschitz domains whenever the Lipschitz constants are bounded.
Thus, in the main theorem, instead of Assumption \ref{A1}, one can assume that the domain is Lipschitz with a bounded Lipschitz constant.
This shows that the flatness of the boundary is irrelevant to Theorem \ref{M1} for a Lipschitz domain.
Moreover, in this case, the conditions in Assumption \ref{A2} can be relaxed to the framework of \cite{MR3320459} with the Ahlfors-David condition.
The relaxation relies on the Sobolev-Poincar\'e type inequality derived in \cite[Appendix]{MR3040944}.
\end{remark}

%

\section{Auxiliary results}	\label{S3}

In this section, we present some auxiliary results that will be used to prove the main theorem.

\subsection{Embeddings}	\label{S3_1}
For $q\in (1,2)$, we denote by $q^\star$ its Sobolev conjugate, that is, 
$q^\star=2q/(2-q)$.
Recall that $\Omega$ is a bounded domain in $\bR^2$.

\begin{lemma}		\label{240213_lem1}
Let $\Omega$ satisfy Assumption \ref{A1}, $q\in (1,2)$, and $u\in W^{1,q}(\Omega)$.
Then  for any $x_0\in \overline{\Omega}$ and $R\in (0, R_0]$, we have 
$$
\|u-(u)_{\Omega_{R/8}(x_0)}\|_{L^{q^\star}(\Omega_{R/8}(x_0))}\le C\|Du\|_{L^q(\Omega_R(x_0))},
$$
where $C=C(q)$.
\end{lemma}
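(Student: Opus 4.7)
The plan is to split into an interior case and a boundary case. If $\operatorname{dist}(x_0,\partial\Omega)\ge R/4$ then $\Omega_{R/8}(x_0)=B_{R/8}(x_0)$ is a genuine planar disk, and the statement is the classical Sobolev--Poincar\'e inequality on a ball, since $q^\star=2q/(2-q)$ is precisely the Sobolev conjugate in two dimensions; Assumption~\ref{A1} plays no role here. In the complementary boundary case there exists $\bar x_0\in\partial\Omega$ with $|x_0-\bar x_0|<R/4$, and a direct computation gives the inclusions $\Omega_{R/8}(x_0)\subset\Omega_{R/2}(\bar x_0)\subset\Omega_R(x_0)$.

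The next step is to replace the reference point $x_0$ by the boundary point $\bar x_0$ using the elementary bound
$$
\|u-(u)_A\|_{L^{q^\star}(A)}\le 2\|u-c\|_{L^{q^\star}(A)},
$$
valid for any constant $c$ and any measurable set $A$ (obtained by H\"older and Jensen applied to $|(u)_A-c|$). Taking $A=\Omega_{R/8}(x_0)$, $c=(u)_{\Omega_{R/2}(\bar x_0)}$, and then enlarging the region of integration to $\Omega_{R/2}(\bar x_0)$ reduces the task to the global Sobolev--Poincar\'e inequality
$$
\|u-(u)_{\Omega_{R/2}(\bar x_0)}\|_{L^{q^\star}(\Omega_{R/2}(\bar x_0))}\le C\|Du\|_{L^q(\Omega_{R/2}(\bar x_0))}
$$
on the Reifenberg-flat piece $\Omega_{R/2}(\bar x_0)$.

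To establish this last inequality, I would apply Assumption~\ref{A1} at the admissible pair $(\bar x_0,R/2)$ to obtain a coordinate system in which $\Omega_{R/2}(\bar x_0)$ is sandwiched between two half-disks with a gap of order $\gamma R$; in particular it contains a sub-half-disk whose measure is comparable to $|B_{R/2}(\bar x_0)|$ with a constant depending only on $\gamma$. Because Assumption~\ref{A1} is inherited at every scale below $R_0$ and the flatness $\gamma\le 1/96$ is small, $\Omega_{R/2}(\bar x_0)$ is a John (equivalently, uniform) domain whose parameters depend only on $\gamma$, and the required Sobolev--Poincar\'e inequality on such domains is classical; the precise variant needed here is the one developed in \cite[Appendix]{MR3040944} that is already cited in the paper. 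A rescaling $u\mapsto u(\bar x_0+Ry)$ makes the resulting constant depend only on $q$.

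The main technical obstacle will be the quantitative John-type structure of $\Omega_{R/2}(\bar x_0)$ in terms of $\gamma$ alone, i.e.\ constructing a chain of Whitney balls from an interior reference point (the center of the inner half-disk supplied by Assumption~\ref{A1}) to an arbitrary point of $\Omega_{R/2}(\bar x_0)$. The smallness $\gamma\le 1/96$ is what prevents the chain from degenerating; once this geometric structure is in hand, the standard iteration along the chain converts the Poincar\'e inequality on each ball into the claimed Sobolev--Poincar\'e inequality on the whole piece, and the remainder of the argument is routine bookkeeping.
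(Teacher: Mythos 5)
Your preliminary reductions (interior/boundary dichotomy, recentering at a nearby boundary point $\bar x_0$, replacing the mean by an arbitrary constant) are all correct, but they convert the lemma into a strictly stronger statement: a Sobolev--Poincar\'e inequality on $\Omega_{R/2}(\bar x_0)$ with the \emph{same} set on both sides. The entire content of the lemma lies in the room between the radii $R/8$ and $R$, and your reduction discards it. The assertion you then need --- that $\Omega_{R/2}(\bar x_0)=\Omega\cap B_{R/2}(\bar x_0)$ is a John domain with constants depending only on $\gamma$ --- is precisely the hard part, and it is not justified by Assumption~\ref{A1}. That assumption controls $\partial\Omega$ at every boundary point and scale, but says nothing about the ``corner'' points of $\partial\Omega\cap\partial B_{R/2}(\bar x_0)$, where the boundary of the truncated set switches from $\partial\Omega$ to the circle. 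A Reifenberg flat boundary has no well-defined tangent: the approximating line in Assumption~\ref{A1} may rotate by an angle of order $\gamma$ between consecutive dyadic scales, so after of order $1/\gamma$ scales it can turn by a definite angle and run nearly parallel to $\partial B_{R/2}(\bar x_0)$ near a corner. The truncation can then develop thin slivers pinched between $\partial\Omega$ and the circle, and the John/corkscrew property of $\Omega_{R/2}(\bar x_0)$ with a constant depending only on $\gamma$ requires a separate argument that you have deferred as ``the main technical obstacle.'' The reference you invoke (\cite[Appendix]{MR3040944}) concerns Lipschitz domains and does not cover this situation.

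The paper proves the lemma by citing \cite[Lemma 3.1]{CK2023}; the argument there (as in the analogous embedding lemmas of \cite{MR4261267} and \cite{MR3809039}) keeps both radii and never needs the truncated set itself to be a Poincar\'e domain. One connects each point of $\Omega_{R/8}(x_0)$ to a fixed reference disk of radius comparable to $R$ contained in $\Omega_R(x_0)$ by a chain of disks lying in $\Omega_R(x_0)$; the chain is allowed to leave $B_{R/8}(x_0)$, which is exactly why the gradient ends up integrated over the larger set, and the resulting pointwise Riesz-potential bound is then converted into the $L^{q^\star}$ estimate. If you want to keep your structure, the correct repair is not to show that the sharp truncation $\Omega_{R/2}(\bar x_0)$ is John, but to construct a John domain $U$ sandwiched as $\Omega_{R/8}(x_0)\subset U\subset\Omega_R(x_0)$ and apply the classical inequality on $U$; constructing such a $U$ is essentially equivalent to the chaining argument just described.
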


\begin{proof}
See \cite[Lemma 3.1]{CK2023}.
\end{proof}

\begin{lemma}		\label{240213_lem2}
Let $\Omega$ satisfy Assumption \ref{A1}, $s\in (2, \infty]$, and $u\in W^{1,s}(\Omega)$.
Then for any $x_0\in \overline{\Omega}$ and $R\in (0, R_0]$, we have 
\begin{equation}		\label{210804@eq3}
[u]_{C^{1-2/s}(\Omega_{R/8}(x_0))}\le C\|Du\|_{L^s(\Omega_R(x_0))}
\end{equation}
and 
\begin{equation}		\label{230804_eq3a}
\|u\|_{L^\infty(\Omega_{R/8}(x_0))}\le CR^{1-2/s}\|Du\|_{L^q(\Omega_R(x_0))}+CR^{-2}\|u\|_{L^1(\Omega_{R/8}(x_0))},
\end{equation}
where $C=C(s)$.
\end{lemma}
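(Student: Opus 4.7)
The plan is to establish \eqref{210804@eq3} first via a Campanato-type characterization, and then deduce \eqref{230804_eq3a} by a standard decomposition of $u$ into its mean value on $\Omega_{R/8}(x_0)$ plus a fluctuation controlled by the H\"older seminorm. I interpret the $L^q$ appearing on the right-hand side of \eqref{230804_eq3a} as a typo for $L^s$, since only $s$ is introduced in the lemma and dimensional balance forces the gradient norm to match the $R^{1-2/s}$ factor.

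For \eqref{210804@eq3}, I would fix an auxiliary exponent $q \in (1,2)$ with $q < s$. For any $x \in \Omega_{R/8}(x_0)$ and $\rho \in (0, R/16]$, so that $\Omega_{8\rho}(x) \subset \Omega_R(x_0)$, Lemma \ref{240213_lem1} applied with base point $x$ and scale $8\rho$, combined with H\"older's inequality on the right-hand side, gives
\begin{equation*}
\|u - (u)_{\Omega_\rho(x)}\|_{L^{q^\star}(\Omega_\rho(x))} \le C \|Du\|_{L^q(\Omega_{8\rho}(x))} \le C \rho^{2/q - 2/s} \|Du\|_{L^s(\Omega_R(x_0))}.
\end{equation*}
Since $q^\star \ge 2$ and Assumption \ref{A1} provides the uniform measure density $|\Omega_\rho(x)| \ge c \rho^2$, one more application of H\"older's inequality yields the Campanato-type estimate
\begin{equation*}
\left(\dashint_{\Omega_\rho(x)} |u - (u)_{\Omega_\rho(x)}|^2 \, dy\right)^{1/2} \le C \rho^{1 - 2/s} \|Du\|_{L^s(\Omega_R(x_0))},
\end{equation*}
where the exponent collapses to $1-2/s$ because $2/q - 2/q^\star = 1$. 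A standard dyadic telescoping over radii then upgrades this mean-oscillation bound at all scales into the pointwise H\"older seminorm estimate \eqref{210804@eq3}. The limiting case $s = \infty$ (Lipschitz continuity) follows the same scheme, or by sending $s \to \infty$ in the finite-$s$ bound.

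For \eqref{230804_eq3a}, decompose $u = \bigl(u - (u)_{\Omega_{R/8}(x_0)}\bigr) + (u)_{\Omega_{R/8}(x_0)}$ on $\Omega_{R/8}(x_0)$. The constant piece is bounded by $C R^{-2} \|u\|_{L^1(\Omega_{R/8}(x_0))}$ because $|\Omega_{R/8}(x_0)| \ge c R^2$, while the fluctuation is controlled in $L^\infty(\Omega_{R/8}(x_0))$ by $\operatorname{diam}(\Omega_{R/8}(x_0))^{1-2/s}$ times the H\"older seminorm, producing a $C R^{1-2/s} \|Du\|_{L^s(\Omega_R(x_0))}$ term via \eqref{210804@eq3}.

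The main obstacle will be the dyadic telescoping step that converts the Campanato bound into a pointwise H\"older estimate on an irregular domain. Concretely, for $x_1, x_2 \in \Omega_{R/8}(x_0)$ at distance $\rho$, one must connect the mean values $(u)_{\Omega_{\rho/2^k}(x_i)}$ to $u(x_i)$ and to each other via a chain of overlapping subdomains whose pairwise intersections carry measure comparable to $\rho^2$. Assumption \ref{A1} with $\gamma \le 1/96$ ensures that each $\Omega_\rho(x)$ contains a half-disk of radius comparable to $\rho$, so the required chains exist inside $\Omega_R(x_0)$; making this argument precise in the Reifenberg flat (not Lipschitz) setting is the only nonroutine ingredient.
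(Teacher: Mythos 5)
The paper does not prove this lemma at all---it simply cites \cite[Lemma 3.3]{CK2023}---so your self-contained argument cannot be compared line-by-line with the source; judged on its own terms, it is correct and follows the standard route one would expect that reference to take. The key chain of reasoning is sound: Lemma \ref{240213_lem1} at base point $x$ and scale $8\rho$ plus H\"older gives $\|u-(u)_{\Omega_\rho(x)}\|_{L^{q^\star}(\Omega_\rho(x))}\le C\rho^{2/q-2/s}\|Du\|_{L^s(\Omega_R(x_0))}$; the measure density $|\Omega_\rho(x)|\ge c\rho^2$ (which Assumption \ref{A1} with $\gamma\le 1/96$ does supply at every point of $\overline\Omega$ and every scale $\rho\le R_0$) converts this into the Campanato decay with exponent $2/q-2/q^\star=1$ collapsing correctly to $1-2/s$; and Campanato's characterization then yields \eqref{210804@eq3}. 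Your reading of the $L^q$ in \eqref{230804_eq3a} as a typo for $L^s$ is confirmed by how the paper later invokes it (in the proof of Lemma \ref{230209_lem1} the substitution ``$q=s'$'' produces $R^{1-2/s'}\|Du\|_{L^{s'}}$, i.e.\ matching exponents), and the mean-plus-fluctuation decomposition for \eqref{230804_eq3a} is routine. One remark on the step you flag as the main obstacle: for two points at small mutual distance $d$ no chain is actually needed, since $\Omega_d(x_1)\subset\Omega_{2d}(x_1)\cap\Omega_{2d}(x_2)$ already has measure $\gtrsim d^2$ by the density condition alone, irrespective of connectivity; the only case requiring a (uniformly short) chain or a direct comparison through $(u)_{\Omega_{R/8}(x_0)}$ is $d$ comparable to $R$, where your Campanato bound was only derived for scales $\rho\lesssim R/16$---but there $d^{1-2/s}\gtrsim R^{1-2/s}$, so a finite number of absolute-constant steps closes the gap. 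So the difficulty is milder than you suggest, and your proof stands.
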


\begin{proof}
See \cite[Lemma 3.3]{CK2023}.
\end{proof}

\begin{lemma}		\label{240213_lem3}
Let $\Omega$ satisfy Assumptions \ref{A1} and \ref{A2} $(i)$, $q\in (1,2)$, and $u\in W^{1,q}_{\cN}(\Omega)$.
Then we have 
$$
\|u\|_{L^{q^\star}(\Omega_{R_0/8}(y_0))}\le C\|Du\|_{L^q(\Omega_{R_0}(y_0))},
$$
where $C=C(\kappa, q)$.
Here, $y_0$ is the point given in Assumption \ref{A2} $(i)$.
\end{lemma}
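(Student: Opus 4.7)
The plan is to reduce to smooth approximating functions, extend by zero across the portion of the boundary near $y_0$ where $u$ has trivial trace, apply a Sobolev--Poincar\'e inequality on a small disk centered at $y_0$ exploiting that the extended function vanishes on a set of definite proportion, and finally---if $\kappa$ is small---upgrade the estimate from that small disk to the disk of radius $R_0/8$ by combining with Lemma \ref{240213_lem1}.

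First, density of $C^\infty_{\cN}(\Omega)$ in $W^{1,q}_{\cN}(\Omega)$ lets me assume $u\in C^\infty_{\cN}(\Omega)$, so $u\equiv 0$ in a neighborhood of $\overline{\cN}$. Set $r=\kappa R_0$. Assumption \ref{A2}$(i)$ gives $\partial\Omega\cap B_r(y_0)\subset \cN$, so extending $u$ by zero on $B_r(y_0)\setminus\Omega$ yields $\tilde u\in W^{1,q}(B_r(y_0))$ with $D\tilde u$ vanishing off $\Omega$. Assumption \ref{A1} applied at $(y_0,r)$ tells me that, in the associated rotated frame, $B_r(y_0)\cap\{y^1\le y_0^1-\gamma r\}$ lies outside $\Omega$; since $\gamma\le 1/96$, this exterior piece has measure at least a universal constant $\theta_0$ times $|B_r(y_0)|$. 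Thus $\tilde u$ vanishes on a set of proportion at least $\theta_0$, and the standard mean-zero Sobolev--Poincar\'e inequality on $B_r(y_0)$ together with
\[
\theta_0|B_r(y_0)|\cdot|(\tilde u)_{B_r(y_0)}|^{q^\star}\le \int_{\{\tilde u=0\}}|(\tilde u)_{B_r(y_0)}-\tilde u|^{q^\star}\,dx\le \|\tilde u-(\tilde u)_{B_r(y_0)}\|_{L^{q^\star}(B_r(y_0))}^{q^\star}
\]
produces
\[
\|u\|_{L^{q^\star}(\Omega\cap B_r(y_0))}\le \|\tilde u\|_{L^{q^\star}(B_r(y_0))}\le C(q)\|Du\|_{L^q(\Omega_{R_0}(y_0))},
\]
with constant independent of $r$ because the $L^q$-to-$L^{q^\star}$ Sobolev embedding in the plane is scale invariant.

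If $\kappa\ge 1/8$, then $\Omega_{R_0/8}(y_0)\subset \Omega\cap B_r(y_0)$ and the lemma follows. If instead $\kappa<1/8$, I would apply Lemma \ref{240213_lem1} at $(x_0,R)=(y_0,R_0)$ to bound $\|u-(u)_{\Omega_{R_0/8}(y_0)}\|_{L^{q^\star}(\Omega_{R_0/8}(y_0))}$ by $C(q)\|Du\|_{L^q(\Omega_{R_0}(y_0))}$, and then control $|(u)_{\Omega_{R_0/8}(y_0)}|$ by passing through $(u)_{\Omega\cap B_r(y_0)}$. H\"older's inequality, the previous estimate on the small disk, and $|\Omega\cap B_r(y_0)|\ge c(\kappa R_0)^2$ give both $|(u)_{\Omega\cap B_r(y_0)}|$ and $|(u)_{\Omega_{R_0/8}(y_0)}-(u)_{\Omega\cap B_r(y_0)}|$ bounded by $C(q)(\kappa R_0)^{-2/q^\star}\|Du\|_{L^q(\Omega_{R_0}(y_0))}$. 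Multiplying by $|\Omega_{R_0/8}(y_0)|^{1/q^\star}\le CR_0^{2/q^\star}$ and assembling the pieces produces the claimed estimate with constant $C=C(\kappa,q)$.

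The main obstacle I anticipate is the zero-extension step combined with the positive-measure Poincar\'e bound: verifying that extension-by-zero sends $W^{1,q}_{\cN}(\Omega)$ into $W^{1,q}(B_r(y_0))$ must be carried through the density argument, and the Sobolev--Poincar\'e inequality for functions vanishing on a set of positive measure must be applied with a constant independent of both $r$ (via scale invariance of the conjugate exponent) and $\kappa$ (via the uniform lower bound $\theta_0$ from $\gamma\le 1/96$). Once this is clean, the small-$\kappa$ bootstrap is routine volume bookkeeping and the factor $\kappa^{-2/q^\star}$ in the final constant is exactly what scaling forces.
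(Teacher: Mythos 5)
Your proposal is correct and follows essentially the same route as the paper: extend by zero across the portion of $\partial\Omega$ inside the small ball around $y_0$ (legitimate since that portion lies in $\cN$ and $u\in W^{1,q}_{\cN}(\Omega)$), use the Reifenberg condition to see the extension vanishes on a definite proportion of the ball, apply the resulting boundary-type Poincar\'e/Sobolev inequality there, and then transfer to $\Omega_{R_0/8}(y_0)$ via Lemma \ref{240213_lem1} and a comparison of averages. The only differences are cosmetic: you work on $B_{\kappa R_0}(y_0)$ rather than $B_{\kappa R_0/8}(y_0)$ and you spell out the ``boundary version of the Poincar\'e inequality'' via the positive-measure zero-set trick, which the paper simply cites.
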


\begin{proof}
We follow the proof  of  \cite[Corollary 3.2 (a)]{MR4261267} with obvious modifications.
We extend $u$ by zero on $B_{\kappa R_0/8}(y_0)\setminus \Omega$ so that 
$$
u\in W^{1,q}(B_{\kappa R_0/8}(y_0)).
$$
Since $\Omega$ satisfies Assumption \ref{A1}, it is easily seen that 
$$
|B_{\kappa R/8}(y_0)\setminus \Omega|\ge C (\kappa R/8)^2,
$$
where $C$ is a universal constant.
Hence by the boundary version of the Poincar\'e inequality, we have 
\begin{equation}		\label{240213_A1}
\|u\|_{L^{q^*}(\Omega_{\kappa R_0/8}(y_0))}\le C\|Du\|_{L^q(\Omega_{\kappa R_0/8}(y_0))},
\end{equation}
where $C=C(q)$.
Notice that  from the triangle inequality and H\"older's inequality, we get  
$$
\begin{aligned}
\|u\|_{L^{q^*}(\Omega_{R_0/8}(y_0))}&\le \|u-(u)_{\Omega_{R_0/8}(y_0)}\|_{L^{q^*}(\Omega_{R_0/8}(y_0))}\\
&\quad +\|(u)_{\Omega_{R_0/8}(y_0)}-(u)_{\Omega_{\kappa R_0/8}(y_0)}\|_{L^{q^*}(\Omega_{R_0/8}(y_0))}\\
&\quad +\|(u)_{\Omega_{\kappa R_0/8}(y_0)}\|_{L^{q^*}(\Omega_{R_0/8}(y_0))}\\
&\le C \|u-(u)_{\Omega_{R_0/8}(y_0)}\|_{L^{q^*}(\Omega_{R_0/8}(y_0))}+C\|u\|_{L^{q^*}(\Omega_{\kappa R_0/8}(y_0))}.
\end{aligned}
$$
This, Lemma \ref{240213_lem1}, and \eqref{240213_A1} yield the desired inequality.
\end{proof}

\begin{lemma}		\label{240213_lem4}
Let $\Omega$ satisfy Assumptions \ref{A1} and \ref{A2} $(ii)$, $q\in (1,2)$, and $u\in W^{1,q}_{\cD}(\Omega)$.
Then for any $x_0\in \Gamma$ and $R\in (0, R_0]$, we have 
$$
\|u\|_{L^{q^\star}(\Omega_{R/8}(x_0))}\le C\|Du\|_{L^q(\Omega_R(x_0))},
$$
where $C=C(\kappa, q)$.
\end{lemma}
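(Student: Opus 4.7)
The plan is to mirror the proof of Lemma \ref{240213_lem3}, substituting Assumption \ref{A2} $(ii)$ (which supplies, at every scale, a Dirichlet patch near any point of $\Gamma$) for the fixed Neumann patch given by Assumption \ref{A2} $(i)$. First, I would invoke Assumption \ref{A2} $(ii)$ at $x_0 \in \Gamma$ \emph{at scale $R/8$} (admissible since $R/8 \le R_0$), producing $z_0 \in \cD \cap B_{R/8}(x_0)$ with
$$
B_{\kappa R/8}(z_0) \subset B_{R/8}(x_0), \qquad \partial\Omega \cap B_{\kappa R/8}(z_0) \subset \cD.
$$
Since each $\varphi \in C^\infty_{\cD}(\Omega)$ vanishes in a neighborhood of $\cD$, its zero extension across $B_{\kappa R/8}(z_0) \setminus \Omega$ lies in $W^{1,q}(B_{\kappa R/8}(z_0))$; passing to the limit, the same holds for $u \in W^{1,q}_{\cD}(\Omega)$.

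Next, Assumption \ref{A1} at the boundary point $z_0$ yields $|B_{\kappa R/8}(z_0) \setminus \Omega| \ge c\,|B_{\kappa R/8}(z_0)|$ with $c$ a \emph{universal} constant, so the zero extension of $u$ vanishes on a fixed fraction of the disk. The classical Sobolev-Poincar\'e inequality on a Euclidean disk, combined with this sizable zero set via the elementary bound $|\{v=0\}|\,|(v)_B|^{q^\star} \le \int_B |v - (v)_B|^{q^\star}$, then gives
$$
\|u\|_{L^{q^\star}(\Omega_{\kappa R/8}(z_0))} \le C(q)\,\|Du\|_{L^q(\Omega_{\kappa R/8}(z_0))} \le C(q)\,\|Du\|_{L^q(\Omega_R(x_0))}.
$$

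Finally, I would transfer this bound from $\Omega_{\kappa R/8}(z_0)$ to $\Omega_{R/8}(x_0)$ by the three-term triangle-inequality decomposition used in the proof of Lemma \ref{240213_lem3}:
$$
\|u\|_{L^{q^\star}(\Omega_{R/8}(x_0))} \le \|u - (u)_{\Omega_{R/8}(x_0)}\|_{L^{q^\star}(\Omega_{R/8}(x_0))} + \|(u)_{\Omega_{R/8}(x_0)} - (u)_{\Omega_{\kappa R/8}(z_0)}\|_{L^{q^\star}(\Omega_{R/8}(x_0))} + \|(u)_{\Omega_{\kappa R/8}(z_0)}\|_{L^{q^\star}(\Omega_{R/8}(x_0))}.
$$
The first term is controlled by $C\|Du\|_{L^q(\Omega_R(x_0))}$ via Lemma \ref{240213_lem1}. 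For the other two, the inclusion $\Omega_{\kappa R/8}(z_0) \subset \Omega_{R/8}(x_0)$ together with H\"older's inequality reduces them to $C(\kappa)\|u - (u)_{\Omega_{R/8}(x_0)}\|_{L^{q^\star}(\Omega_{R/8}(x_0))}$ and $C(\kappa)\|u\|_{L^{q^\star}(\Omega_{\kappa R/8}(z_0))}$, respectively, with the $\kappa$-dependence absorbing the volume ratio $|\Omega_{R/8}(x_0)|/|\Omega_{\kappa R/8}(z_0)|$, which is bounded by a constant depending only on $\kappa$ by Assumption \ref{A1}. The only real subtlety is the choice of scale in $(ii)$: applying it at scale $R/8$ rather than $R$ is exactly what forces $B_{\kappa R/8}(z_0) \subset B_{R/8}(x_0)$ and thereby makes the nested-averages step work; once this is arranged, the argument is a direct adaptation of Lemma \ref{240213_lem3}.
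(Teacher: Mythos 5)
Your proposal is correct and follows essentially the same route the paper intends: the paper's own proof of this lemma simply defers to the argument of Lemma \ref{240213_lem3} (and \cite[Corollary 3.2 (a)]{MR4261267}), and your adaptation --- invoking Assumption \ref{A2} $(ii)$ at scale $R/8$ to get the Dirichlet patch $B_{\kappa R/8}(z_0)\subset B_{R/8}(x_0)$, extending $u$ by zero there, applying the boundary Poincar\'e inequality via the measure lower bound from Assumption \ref{A1}, and concluding with the three-term triangle inequality plus Lemma \ref{240213_lem1} --- is exactly that argument with the correct modifications.
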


\begin{proof}
The proof is similar to  that of Lemma \ref{240213_lem3}.
See also \cite[Corollary 3.2 (a)]{MR4261267}.
\end{proof}

Combining the local inequalities in Lemmas \ref{240213_lem1}, \ref{240213_lem3}, and \ref{240213_lem4}, we obtain the following Sobolev inequality over $\Omega$.

\begin{lemma}		\label{240212_lem3}
Let $\Omega$ satisfy Assumptions \ref{A1} and \ref{A2}, $q\in (1,2)$, and $u\in W^{1,q}_{\cN}(\Omega)$.
Then we have 
$$
\|u\|_{L^{q^\star}(\Omega)}\le C\|Du\|_{L^q(\Omega)},
$$
where $C=C(R_0, \kappa, \operatorname{diam}(\Omega), q)$.
The same result also holds if $\cN$ is replaced with $\cD$.
\end{lemma}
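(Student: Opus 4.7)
The plan is a standard cover-and-chain argument, using the local Sobolev inequalities already established. For $u \in W^{1,q}_{\cN}(\Omega)$, I would first anchor the global $L^{q^\star}$ bound near the point $y_0$ from Assumption \ref{A2}$(i)$ via Lemma \ref{240213_lem3}, and then propagate that bound to the rest of $\Omega$ by combining the Poincar\'e-type estimate of Lemma \ref{240213_lem1} with the connectedness of $\overline{\Omega}$.

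Concretely, fix a radius $r$ comparable to $R_0$ and choose a maximal $r$-separated set $\{x_1, \dots, x_N\} \subset \overline{\Omega}$ with $x_1 = y_0$; boundedness of $\Omega$ forces $N \le N(R_0, \operatorname{diam}(\Omega))$, and the balls $\{B_r(x_i)\}$ cover $\overline{\Omega}$ with uniformly bounded overlap. Set $U_i = \Omega_r(x_i)$ and $c_i = (u)_{U_i}$. Lemma \ref{240213_lem3} at $y_0$ gives the anchor $\|u\|_{L^{q^\star}(U_1)} \le C\|Du\|_{L^q(\Omega)}$, which also controls $|c_1|$ by H\"older. At every $x_i$, Lemma \ref{240213_lem1} yields $\|u - c_i\|_{L^{q^\star}(U_i)} \le C\|Du\|_{L^q(\Omega)}$. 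The triangle inequality then reduces the proof to a uniform bound on the $|c_i|$: using connectedness of $\overline{\Omega}$, for each $i$ I would form a chain $x_1 = x_{j_1}, \dots, x_{j_m} = x_i$ of length $m \le N$ along which consecutive sets $U_{j_k}, U_{j_{k+1}}$ overlap on a region of measure $\gtrsim r^2$; a short H\"older argument then bounds $|c_{j_k} - c_{j_{k+1}}|$ by $\|u - c_{j_k}\|_{L^{q^\star}(U_{j_k})} + \|u - c_{j_{k+1}}\|_{L^{q^\star}(U_{j_{k+1}})}$, and telescoping gives $|c_i| \le C\|Du\|_{L^q(\Omega)}$. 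Summing the triangle-inequality pieces yields $\|u\|_{L^{q^\star}(\Omega)} \le C\|Du\|_{L^q(\Omega)}$. The case $u \in W^{1,q}_{\cD}(\Omega)$ is handled identically, with the anchor supplied by Lemma \ref{240213_lem4} applied at any $x_0 \in \Gamma$.

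The main obstacle is constructing the chain with substantial overlap at each link. Reifenberg flatness (Assumption \ref{A1}) is what makes this possible, since it rules out thin necks of $\Omega$ within each small ball and guarantees that adjacent $U_{j_k}$ contain a fair share of their geometric intersection, so that local averages of $u$ can indeed be compared along the chain. This is the step requiring the most care, though it is conceptually routine.
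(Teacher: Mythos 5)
Your proposal is correct and follows essentially the same route as the paper's proof: anchor the estimate near $y_0$ via Lemma \ref{240213_lem3} (resp.\ near a point of $\Gamma$ via Lemma \ref{240213_lem4}), apply the local Sobolev--Poincar\'e inequality of Lemma \ref{240213_lem1} on a finite cover of scale comparable to $R_0$, and propagate along a chain of overlapping sets --- the paper telescopes the $L^{q^\star}$-norms directly instead of comparing the averages $c_i$, but this is the same argument. The overlap concern you raise at the end is handled automatically, since Assumption \ref{A1} already gives $|\Omega_r(x)|\gtrsim r^2$ for all $x\in\overline{\Omega}$ and $r\le R_0$, so consecutive chain points at distance $\le R_0/16$ suffice.
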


\begin{proof}
We only prove the case with $\cN$ because the other case is similar.
Due to a covering argument, it suffices to show that for any $x_0\in \overline{\Omega}$, we have
\begin{equation}		\label{240212_eq3}
\|u\|_{L^{q^\star}(\Omega_{R_0/16}(x_0))}\le C\|Du\|_{L^q(\Omega)}.
\end{equation}
Let $y_0$ be the point given in Assumption \ref{A2} $(i)$.
For  $x_0\in \overline{\Omega}$, 
we choose a chain of subdomains $\Omega_{R_0/16}(z_i)$, $i\in \{1,\ldots, m\}$, with $z_i\in \overline{\Omega}$ satisfying
$$
z_1=x_0, \quad z_m=y_0, \quad |z_i-z_{i+1}|\le R_0/16, \quad i\in \{1,\ldots, m-1\},
$$
where $m=m(R_0, \operatorname{diam}(\Omega))$.
From the triangle inequality it follows that  
$$
\|u\|_{L^{q^\star}(\Omega_{R_0/16}(z_1))}\le \sum_{i=1}^{m-1} \|u-(u)_{\Omega_{R_0/8}(z_{i+1})}\|_{L^{q^\star}(\Omega_{R_0/16}(z_{i}))}+\|u\|_{L^{q^\star}(\Omega_{R_0/16}(z_m))}.
$$
Note that by Lemma \ref{240213_lem1},   
$$
\begin{aligned}
\|u-(u)_{\Omega_{R_0/16}(z_{i+1})}\|_{L^{q^\star}(\Omega_{R_0/16}(z_{i}))}
&\le C\|u-(u)_{\Omega_{R_0/8}(z_{i})}\|_{L^{q^\star}(\Omega_{R_0/8}(z_{i}))}\\
&\le C\|Du\|_{L^q(\Omega)}, 
\end{aligned}
$$
and that by Lemma \ref{240213_lem3},
$$
\|u\|_{L^{q^\star}(\Omega_{R_0/16}(z_m))}\le C\|Du\|_{L^q(\Omega)}.
$$
Combining these together, we get  \eqref{240212_eq3}.
\end{proof}

\subsection{Solvability}	\label{S3_2}


In \cite[Lemma 7.2]{MR4261267}, the $W^{1,q}_{\cN}$ solvability of the divergence equation was established under the assumption that  the separation between $\cD$ and $\cN$ is Riefenberg flat.
This assumption can be relaxed to a weaker condition as in Assumption \ref{A2}; see Lemma \ref{230126_lem1} below.

Throughout this subsection, we assume that $\Omega$ is a bounded Reifenberg flat domain in $\bR^2$ satisfying Assumptions \ref{A1} and \ref{A2}.

\begin{lemma}		\label{230126_lem1}
Let $f\in L^q(\Omega)$ with $q\in (1, \infty)$. 
Then there exists $u\in W^{1,q}_{\cN}(\Omega)^2$ such that 
$$
\operatorname{div} u=f \, \text{ in }\, \Omega, \quad  \|Du\|_{L^q(\Omega)}\le C \|f\|_{L^q(\Omega)}, 
$$
where $C=C(R_0, \kappa, \operatorname{diam} (\Omega),q)$.
The same result also holds if $\cN$ is replaced with $\cD$.
\end{lemma}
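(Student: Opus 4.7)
The plan is to follow the Bogovski\u{\i}-type duality argument of \cite[Lemma 7.2]{MR4261267}, replacing their use of the Reifenberg flatness of the separation $\Gamma$ with the weaker Assumption \ref{A2}. By the closed-range theorem, the surjectivity of $\operatorname{div}\colon W^{1,q}_{\cN}(\Omega)^2 \to L^q(\Omega)$ with an inverse of operator norm $C$ is equivalent to the Ne\v{c}as-type inequality
\[
\|p\|_{L^{q'}(\Omega)} \le C\, \sup\bigg\{\int_\Omega p\operatorname{div}\varphi\,dx \,:\, \varphi\in W^{1,q}_{\cN}(\Omega)^2,\ \|D\varphi\|_{L^q(\Omega)}\le 1\bigg\}
\]
for every $p \in L^{q'}(\Omega)$, where $q'=q/(q-1)$. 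Once this inequality is established, a Hahn--Banach argument produces the required $u\in W^{1,q}_{\cN}(\Omega)^2$ with $\operatorname{div} u=f$ and $\|Du\|_{L^q(\Omega)}\le C\|f\|_{L^q(\Omega)}$.

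First I would localize $p$ to coordinate charts provided by Assumption \ref{A1}, in which $\Omega$ is approximated by a half-disk. On interior patches and on boundary patches that meet $\partial\Omega$ only in the ``free'' portion $\cN$, the local Ne\v{c}as inequality follows from the mean-zero Sobolev--Poincar\'e inequality of Lemma \ref{240213_lem1} combined with the classical Bogovski\u{\i} construction on the model half-disk. On the distinguished patch around the pole $y_0$ supplied by Assumption \ref{A2}(i), Lemma \ref{240213_lem3} supplies the absolute (non-mean-zero) variant, which removes the additive constant ambiguity in $p$ and anchors the argument globally.

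The crucial local step, and the only place where the present setting departs from \cite{MR4261267}, is the estimate on patches crossing $\Gamma$: the argument there straightens $\Gamma$ by Reifenberg flatness, which is no longer available. Instead, I would invoke Lemma \ref{240213_lem4}, whose proof uses Assumption \ref{A2}(ii) to obtain the Sobolev--Poincar\'e inequality for $W^{1,q}_{\cD}$ on the sub-disk $B_{\kappa R}(z_0)$; in duality this yields the local Ne\v{c}as bound on each transition patch. Once all local inequalities are in hand, the chaining construction of Lemma \ref{240212_lem3} combines them into the global inequality, with the number of links in the chain controlled by $R_0$ and $\operatorname{diam}(\Omega)$. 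The symmetric case with $\cN$ replaced by $\cD$ is handled by the same scheme, anchoring the chain at any point of $\Gamma$ and using Lemma \ref{240213_lem4} as the terminal estimate.

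The main obstacle I anticipate is the bookkeeping required to ensure that the iterative partition-of-unity construction preserves the vanishing trace on $\cN$ (respectively $\cD$) at each step, particularly in the transition patches near $\Gamma$ where local models with different boundary conditions overlap. This is routine once the local half-disk models are chosen compatibly with Assumption \ref{A2}, but requires care in tracking the divergence remainders introduced by multiplying local solutions by cutoffs; these remainders are themselves in $L^q(\Omega)$ and are absorbed by reapplying the local Bogovski\u{\i} operator on successively smaller scales, as in \cite{MR4261267}.
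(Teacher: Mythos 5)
Your overall strategy (closed range/Ne\v{c}as inequality plus a localized Bogovski\u{\i} construction with chaining) is a legitimate alternative to what the paper does, but as written it has two genuine gaps. The paper's proof is much more direct: using Assumption \ref{A2} $(ii)$ it fixes a ball $B_{\kappa R_0}(z_0)$ whose boundary portion lies in $\cD$, takes a Whitney decomposition of $\Omega_{\kappa R_0/8}(z_0)$, dilates the Whitney cubes to build an enlarged domain $\hat{\Omega}\supset\Omega$ with $\cN\subset\partial\hat{\Omega}$, shows $\hat{\Omega}$ is a John domain, extends $f$ to a mean-zero $\hat{f}$ on $\hat{\Omega}$, solves $\operatorname{div}u=\hat f$ with $u\in W^{1,q}_0(\hat\Omega)^2$ by the known John-domain result, and restricts back. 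No localization, partition of unity, or inf-sup duality is needed.

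The first gap in your argument is the claim that Lemma \ref{240213_lem4} (respectively Lemma \ref{240213_lem3}) yields the local Ne\v{c}as bound ``in duality.'' A Sobolev--Poincar\'e inequality for scalar functions vanishing on a boundary portion is not dual to the inf-sup condition for the divergence operator; the Ne\v{c}as inequality requires, for each $p$, the \emph{construction} of a vector field $\varphi$ in the admissible class with $\int p\operatorname{div}\varphi\gtrsim\|p\|_{L^{q'}}^{q'}$ and $\|D\varphi\|_{L^q}\lesssim 1$, which is precisely the solvability of the divergence equation you are trying to prove. On the transition patches near $\Gamma$ this is the entire difficulty, and your proposal offers no mechanism for it beyond the appeal to Lemma \ref{240213_lem4}; some version of the paper's extension (or a reflection across the $\cD$-portion guaranteed by Assumption \ref{A2} $(ii)$) is still required there.

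The second gap is that your anchoring is reversed. For $u\in W^{1,q}_{\cN}(\Omega)^2$ one has $\int_\Omega\operatorname{div}u\,dx=\int_{\cD}u\cdot\nu\,ds$, so data with nonzero mean can only be discharged through $\cD$; equivalently, in the dual formulation the constant part of $p$ is only seen by test fields with nonzero flux through $\cD$. The anchor patch for the $\cN$-case must therefore be the ball from Assumption \ref{A2} $(ii)$ whose boundary portion lies in $\cD$ (this is exactly the $z_0$ the paper uses), not the $\cN$-ball around $y_0$ from Assumption \ref{A2} $(i)$. On the patch around $y_0$ the admissible fields vanish on $\partial\Omega$, so only mean-zero data is locally solvable there. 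The same reversal affects your treatment of the symmetric $\cD$-case, which should be anchored at $y_0$.
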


\begin{proof}
The proof is similar to that of \cite[Lemma 7.2]{MR4261267}.
We only prove the case with $\cN$.
Due to Assumption \ref{A2} $(ii)$, we can fix $z_0\in \partial \Omega$ such that 
$$
\big(\partial \Omega \cap B_{\kappa R_0}(z_0)\big) \subset \cD.
$$
We take the Whitney decomposition of the open set $\Omega_{\kappa R_0/8}(z_0)$, that is, 
$$
\Omega_{\kappa R_0/8}(z_0)=\cup_i Q_i,
$$
where $Q_i$ are disjoint cubes satisfying 
$$
\operatorname{diam}(Q_i)\le \operatorname{dist}\big(\overline{Q_i},  \big(\Omega_{\kappa R_0/8}(z_0)\big)^c\big)\le 4\operatorname{diam}(Q_i).
$$
We extend $Q_i$ to $\hat{Q}_i$ in the way that 
$$
\hat{Q}_i-x_i=8(Q_i-x_i),
$$
where $x_i$ denotes the center of $Q_i$.
Define 
$$
\hat{\Omega}=\Omega\cup \big(\cup_i \hat{Q}_i\big).
$$
Then it is easily seen that 
$$
\cN\subset \partial \hat{\Omega}, \quad C_0^{-1} (\kappa R_0)^2 \le |\hat{\Omega}\setminus \Omega|\le C(\kappa R_0)^2,
$$
where $C>1$ is a universal constant.
Moreover, by following the proof of \cite[Lemma 7.2]{MR4261267}, we see that  $\hat{\Omega}$ is a John domain.

Now we define the extension of $f$,  denoted by $\hat{f}$, to be 
$$
\hat{f}=
\begin{cases}
f &\text{in }\,  \Omega,\\
\displaystyle-\frac{1}{|\hat{\Omega}\setminus \Omega|} \int_\Omega f\,dx & \text{in }\, \hat{\Omega}\setminus \Omega.
\end{cases}
$$
Then, $(\hat{f})_{\hat{\Omega}}=0$ and 
\begin{equation}		\label{240212_eq4}
\|\hat{f}\|_{L^q(\hat{\Omega})}\le C_1\|f\|_{L^q(\Omega)},
\end{equation}
where $C_1=C_1(R_0, \kappa, \operatorname{diam}(\Omega), q)$.
Since $\hat{\Omega}$ is a John domain, we can apply \cite[Theorem 4.1]{MR4261267} to find  $u\in W^{1,q}_0(\hat{\Omega})^2$ satisfying
$$
\operatorname{div} u=\hat{f} \, \text{ in }\, \hat{\Omega}, \quad \|Du\|_{L^q(\hat{\Omega})}\le C_2 \|\hat{f}\|_{L^q(\hat{\Omega})},
$$
where $C_2=C_2(R_0, \operatorname{diam}(\Omega), q)$.
From this combined with \eqref{240212_eq4} and the fact that $\cN\subset \partial \hat{\Omega}$, we get 
$$
u\in W^{1,q}_{\cN} (\Omega)^2, \quad \operatorname{div} u=f \, \text{ in }\, \Omega, \quad \|Du\|_{L^q(\hat{\Omega})}\le C_1\cdot C_2 \|f\|_{L^q(\Omega)}.
$$
The lemma is proved.
\end{proof}

Since Lemma \ref{230126_lem1} is now available, by following the proof of \cite[Lemma 3.2]{MR3693868}, we obtain the following $L^2$-estimate and solvability for the mixed boundary value problem.
Recall that we do not impose any regularity assumptions on the coefficients $A^{\alpha\beta}$ of the operator $\cL$.

\begin{lemma}		\label{230127_lem2}
Let $f^\alpha\in L^2(\Omega)^2$ and $g\in L^2(\Omega)$.
Then there exists a unique $(u, p)\in W^{1,2}_{\cD}(\Omega)^2\times L^2(\Omega)$ satisfying 
\begin{equation}		\label{230130_eq1}
\begin{cases}
\operatorname{div} u=g &\text{in }\, \Omega,\\
\cL u+\nabla p=D_\alpha f^\alpha &\text{in }\, \Omega,\\
\cB u+p \nu=\nu_\alpha f^\alpha  & \text{on }\, \cN,\\
u=0 & \text{on }\, \cD.
\end{cases}
\end{equation}
Moreover, we have 
$$
\|Du\|_{L^2(\Omega)}+\|p\|_{L^2(\Omega)}\le C \big(\|f^\alpha\|_{L^2(\Omega)}+\|g\|_{L^2(\Omega)}\big),
$$
where $C=C(\lambda,  R_0, \kappa, \operatorname{diam}(\Omega))$.
\end{lemma}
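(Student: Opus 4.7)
The plan is to adapt the standard Lax--Milgram plus de Rham approach to the mixed boundary setting, using Lemma \ref{230126_lem1} twice in a crucial way.

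\textbf{Step 1: Reduction to $g\equiv 0$.} I would first apply Lemma \ref{230126_lem1} (in the form stated with $\cN$ replaced by $\cD$) to produce $v\in W^{1,2}_{\cD}(\Omega)^2$ with $\operatorname{div} v=g$ and $\|Dv\|_{L^2(\Omega)}\le C\|g\|_{L^2(\Omega)}$. Writing $u=v+w$, the problem \eqref{230130_eq1} becomes the search for $(w,p)\in W^{1,2}_{\cD}(\Omega)^2\times L^2(\Omega)$ with $\operatorname{div} w=0$, $\cL w+\nabla p=D_\alpha \tilde f^\alpha$, $\cB w+p\nu=\nu_\alpha \tilde f^\alpha$ on $\cN$, and $w=0$ on $\cD$, where $\tilde f^\alpha=f^\alpha-A^{\alpha\beta}D_\beta v$.

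\textbf{Step 2: Lax--Milgram on divergence-free functions.} Let
$$
H=\{w\in W^{1,2}_{\cD}(\Omega)^2 : \operatorname{div} w=0\},
$$
a closed subspace, and consider $a(w,\varphi)=\int_\Omega A^{\alpha\beta}D_\beta w\cdot D_\alpha\varphi\,dx$. Boundedness is immediate; for coercivity, strong ellipticity combined with the global Poincar\'e-type inequality (obtained from Lemma \ref{240212_lem3} applied to $W^{1,2}_{\cD}$ together with the boundedness of $\Omega$ and the embedding $L^{q^\star}(\Omega)\hookrightarrow L^2(\Omega)$ for $q$ slightly less than $2$) yields $a(w,w)\ge c\|Dw\|_{L^2}^2$. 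Lax--Milgram then produces a unique $w\in H$ with $a(w,\varphi)=\int_\Omega \tilde f^\alpha\cdot D_\alpha\varphi\,dx$ for all $\varphi\in H$, and $\|Dw\|_{L^2(\Omega)}\le C\|\tilde f^\alpha\|_{L^2(\Omega)}$.

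\textbf{Step 3: Recovery of the pressure via de Rham.} Define the continuous linear functional on $W^{1,2}_{\cD}(\Omega)^2$ by
$$
F(\varphi)=\int_\Omega A^{\alpha\beta}D_\beta w\cdot D_\alpha\varphi\,dx-\int_\Omega \tilde f^\alpha\cdot D_\alpha\varphi\,dx,
$$
which vanishes on $H=\ker\bigl(\operatorname{div}\!:\!W^{1,2}_{\cD}(\Omega)^2\to L^2(\Omega)\bigr)$. Lemma \ref{230126_lem1} (with $\cN\leftrightarrow\cD$) tells us that this divergence map is a bounded surjection admitting a bounded right inverse; the closed range theorem then gives a unique $p\in L^2(\Omega)$ with $F(\varphi)=-\int_\Omega p\operatorname{div}\varphi\,dx$ for every $\varphi\in W^{1,2}_{\cD}(\Omega)^2$, and $\|p\|_{L^2(\Omega)}\le C(\|Dw\|_{L^2(\Omega)}+\|\tilde f^\alpha\|_{L^2(\Omega)})$. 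Tracing back, $(u,p)=(v+w,p)$ is the desired solution with the claimed estimate.

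\textbf{Step 4: Uniqueness.} If $(u,p)$ solves \eqref{230130_eq1} with $f^\alpha=0$ and $g=0$, testing the weak formulation with $\varphi=u\in W^{1,2}_{\cD}(\Omega)^2$ eliminates the pressure term (since $\operatorname{div} u=0$) and, by ellipticity and Poincar\'e, forces $u\equiv 0$. The weak formulation then reduces to $\int_\Omega p\operatorname{div}\varphi\,dx=0$ for all $\varphi\in W^{1,2}_{\cD}(\Omega)^2$, and the surjectivity of the divergence onto all of $L^2(\Omega)$ forces $p\equiv 0$.

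The main obstacle is Step 3: the fact that the pressure lives in $L^2(\Omega)$ itself (rather than in $L^2/\mathbb{R}$) and that its norm is controlled by a constant depending only on $R_0$, $\kappa$, $\operatorname{diam}(\Omega)$ hinges precisely on the bounded solvability of the divergence equation in $W^{1,2}_{\cD}$ provided by Lemma \ref{230126_lem1}. This is also what distinguishes the mixed case from the pure Dirichlet case, where only mean-zero divergence data is admissible; here the presence of the conormal part $\cN$ and Assumption \ref{A2}(ii) on $\cD$ together make the unrestricted surjectivity available.
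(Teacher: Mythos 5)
Your proposal is correct and follows essentially the same route as the paper, which proves this lemma by combining Lemma \ref{230126_lem1} with the argument of \cite[Lemma 3.2]{MR3693868}: lift the divergence data, apply Lax--Milgram on the divergence-free subspace of $W^{1,2}_{\cD}(\Omega)^2$, and recover the pressure in all of $L^2(\Omega)$ from the bounded surjectivity of the divergence operator supplied by Lemma \ref{230126_lem1}. Your closing observation that the unrestricted (non--mean-zero) solvability of the divergence equation is precisely what yields a genuine $L^2$ pressure, rather than a pressure modulo constants, matches the paper's reason for establishing Lemma \ref{230126_lem1} first.
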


\begin{lemma}		\label{230130_lem1}
Let $(u, p)\in W^{1,2}_{\cD}(\Omega)^2\times L^2(\Omega)$ satisfy  \eqref{230130_eq1} with $f^\alpha\in L^2(\Omega)^2$ and $g\in L^2(\Omega)$.
Then for any $x_0\in \overline{\Omega}$ and $R\in (0, R_0]$,
we have 
$$
\begin{aligned}
(|Du|^2+|p|^2)_{\Omega_{R/16}(x_0)}^{1/2}
&\le \theta (|p|^2)_{\Omega_{R}(x_0)}^{1/2}\\
&\quad+C (|Du|^q+|p|^q)_{\Omega_{R}(x_0)}^{1/q}+C (|f^\alpha|^2+|g|^2)_{\Omega_{R}(x_0)}^{1/2},
\end{aligned}
$$
where $q\in (1,2)$, $\theta\in (0,1)$, and $C=C(\lambda, R_0, \kappa,  \operatorname{diam}(\Omega), q, \theta)$.
\end{lemma}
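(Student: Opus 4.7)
The plan is to combine three ingredients, following the scheme of \cite[Lemma 3.2]{MR3693868}: a Caccioppoli-type estimate for $Du$, an $L^2$ pressure bound via the divergence corrector of Lemma~\ref{230126_lem1}, and the Sobolev--Poincar\'e inequalities of Section~\ref{S3_1} with sub-critical exponent $q<2$. The small coefficient $\theta$ is then produced by a short finite iteration of the two-scale estimate that results. The novelty compared with the pure Dirichlet case is the handling of the mixed boundary condition when choosing test functions and the divergence corrector.

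Fix a cutoff $\eta\in C_c^\infty(B_{R/8}(x_0))$ with $\eta\equiv 1$ on $B_{R/16}(x_0)$ and $|D\eta|\le C/R$, and set $\bar p=(p)_{\Omega_R(x_0)}$. Split into two cases according to whether $\cD$ is locally substantial near $x_0$: in the Dirichlet-active case set $\bar u=0$ and invoke Lemma~\ref{240213_lem4}, in the interior/Neumann case set $\bar u=(u)_{\Omega_R(x_0)}$ and invoke Lemma~\ref{240213_lem1}. Testing the weak form of \eqref{230130_eq1} against $\varphi=\eta^2(u-\bar u)\in W^{1,2}_{\cD}(\Omega)^2$, using $\operatorname{div}\varphi=\eta^2 g+2\eta D\eta\cdot(u-\bar u)$, strong ellipticity, and Young's inequality with a free parameter $\epsilon>0$ to separate the $p$--$Du$ cross term supported on the annulus, followed by Sobolev--Poincar\'e applied to $u-\bar u$, gives the Caccioppoli bound
\begin{equation*}
(|Du|^2)_{\Omega_{R/16}(x_0)}^{1/2}\le C_\epsilon(|Du|^q)_{\Omega_R(x_0)}^{1/q}+\epsilon^{1/2}(|p-\bar p|^2)_{\Omega_{R/8}(x_0)}^{1/2}+C_\epsilon(|f^\alpha|^2+|g|^2)_{\Omega_R(x_0)}^{1/2}.
\end{equation*}
In parallel, applying Lemma~\ref{230126_lem1} (with the role of $\cN$ replaced by $\cD$, inside the subdomain $\Omega_{R/8}(x_0)$) yields a divergence corrector $w\in W^{1,2}_{\cD}$ with $\operatorname{div} w=(p-\bar p)\chi_{\Omega_{R/16}(x_0)}$ modulo a mean-value correction, and $\|Dw\|_{L^2}\le C\|p-\bar p\|_{L^2(\Omega_{R/16}(x_0))}$. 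Its zero-extension is an admissible test function for \eqref{230130_eq1}, and testing gives
\begin{equation*}
(|p-\bar p|^2)_{\Omega_{R/16}(x_0)}^{1/2}\le C(|Du|^2)_{\Omega_{R/8}(x_0)}^{1/2}+C(|f^\alpha|^2)_{\Omega_{R/8}(x_0)}^{1/2}.
\end{equation*}

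Combining the two displayed inequalities and using $|\bar p|\le(|p|^q)_{\Omega_R(x_0)}^{1/q}$ by H\"older (to convert $(|p-\bar p|^2)^{1/2}$ into $(|p|^2)^{1/2}+(|p|^q)^{1/q}$) yields a two-scale estimate of the schematic form $\Phi(r)\le C\epsilon^{1/2}\Phi(4r)+C_\epsilon\Psi$, valid for all $r\in[R/16,R/4]$, where $\Phi(r):=(|Du|^2+|p|^2)_{\Omega_r(x_0)}^{1/2}$ and $\Psi$ collects the $L^q$-velocity and pressure and data averages on $\Omega_R(x_0)$. Iterating this twice along $R/16\to R/4\to R$, with $\epsilon$ chosen small enough in terms of the fixed constant $C$ and the target $\theta$, delivers the asserted estimate. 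The main technical obstacle is the construction of the divergence corrector $w$ that vanishes on $\cD$ near $x_0$ (so as to be an admissible $W^{1,2}_{\cD}$ test function for the Stokes equation) while producing the prescribed divergence on $\Omega_{R/16}(x_0)$; this is precisely where Lemma~\ref{230126_lem1}, together with the geometric Assumptions~\ref{A1}--\ref{A2} on $(\Omega,\cD,\cN)$, is used in an essential way.
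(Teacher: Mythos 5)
Your overall strategy coincides with the paper's: a Caccioppoli estimate via the test function $\eta^2 u$ (with the Sobolev--Poincar\'e inequalities of Lemmas~\ref{240213_lem1} and~\ref{240213_lem4} at the subcritical exponent $q$ controlling the $R^{-2}\int|u|^2$ term), plus a pressure bound obtained by testing with a divergence corrector from Lemma~\ref{230126_lem1}. Two points in your write-up, however, do not go through as stated. First, you invoke Lemma~\ref{230126_lem1} ``inside the subdomain $\Omega_{R/8}(x_0)$'' to produce $w\in W^{1,2}_{\cD}$ with $\operatorname{div}w=(p-\bar p)\chi_{\Omega_{R/16}(x_0)}$ ``modulo a mean-value correction.'' The lemma is a global statement on $\Omega$ under Assumptions~\ref{A1}--\ref{A2}; the subdomain $\Omega_{R/8}(x_0)$ need not satisfy those hypotheses, and once you impose zero boundary values on $\Omega\cap\partial B_{R/8}(x_0)$ a genuine mean-zero compatibility condition appears, so the divergence of $w$ is no longer what you prescribed on $\Omega_{R/16}(x_0)$ and the resulting error term is not addressed. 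The paper avoids this entirely: it takes the \emph{global} corrector $v\in W^{1,2}_{\cD}(\Omega)^2$ with $\operatorname{div}v=p\chi_{\Omega_{R/16}(x_0)}$ and $\|Dv\|_{L^2(\Omega)}\le C\|p\|_{L^2(\Omega_{R/16}(x_0))}$ (no compatibility condition is needed since $v$ is free on $\cN$), localizes by testing with $\eta^2 v$, and controls the annulus term $\|v\|_{L^{q/(q-1)}}$ by $\|Dv\|_{L^{2q/(3q-2)}}$ via Lemma~\ref{240213_lem4}, absorbing it with Young's inequality.

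Second, your closing iteration does not close in the schematic form you give. The pressure estimate carries $(|Du|^2)_{\Omega_{R/8}(x_0)}^{1/2}$ on the right with a \emph{large} constant $C$, so the quantity $\Phi(r)=(|Du|^2+|p|^2)_{\Omega_r(x_0)}^{1/2}$ does not satisfy $\Phi(r)\le C\epsilon^{1/2}\Phi(4r)+C_\epsilon\Psi$; and even if it did, iterating would place $\theta$ in front of $(|Du|^2+|p|^2)_{\Omega_R(x_0)}^{1/2}$ rather than $(|p|^2)_{\Omega_R(x_0)}^{1/2}$ as the lemma asserts. The fix is simpler than an iteration: the lemma is deliberately stated so that the term $\theta(|p|^2)_{\Omega_R(x_0)}^{1/2}$ produced directly by Young's inequality in the $p$--$Du$ cross term need never be absorbed. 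One obtains $\int\eta^2|Du|^2\le\theta'\int_{\Omega_R}|p|^2+C(L^q\text{ and data terms})$ in one step, then substitutes this into the pressure bound $\int\eta^2|p|^2\le C\int\eta^2|Du|^2+CR^{2-4/q}(\int_{\Omega_R}|p|^q)^{2/q}+C\int_{\Omega_R}|f^\alpha|^2$ and chooses $\theta'=\theta/C$. Finally, note that the delicate case is $x_0\in\Gamma$, where both boundary conditions are active; there the absence of a mean subtraction in $u$ (justified by Lemma~\ref{240213_lem4}, hence by Assumption~\ref{A2}~$(ii)$) is what makes the test functions $\eta^2u$ and $\eta^2v$ admissible, and your two-case split should be refined accordingly.
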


\begin{proof}
Based on a covering argument, it suffices to consider the following four cases:
$$
\text{(i) }\, B_{R}(x_0)\subset \Omega, \quad \text{(ii) }\, x_0\in \partial \Omega, \quad B_{R}(x_0)\cap \partial \Omega\subset \cN,
$$
$$
\text{(iii) }\, x_0\in \partial \Omega, \quad B_{R}(x_0)\cap \partial \Omega\subset \cD, \quad \text{(iv) }\, x_0\in \Gamma.
$$
For the first two cases, we refer the reader to \cite[Lemma 3.6]{MR3809039}, where the authors obtained the corresponding estimate for solutions to pure conormal derivative problems.
See also \cite[Lemma 3.8]{MR3758532} for the third case, where pure Dirichlet problems were considered.

We present the detailed proof for (iv), in which we need to deal with the mixed Dirichlet-conormal derivative boundary conditions.
Denote 
$B_r=B_r(x_0)$ and $ \Omega_r=\Omega_r(x_0)$ for all $r>0$.
Let $\eta$ be a smooth function in $\bR^2$ such that 
$$
0\le \eta\le1, \quad \eta\equiv 1 \, \text{ in }\, B_{R/16}, \quad \operatorname{supp} \eta \subset B_{R/8}, \quad |\nabla \eta|\le CR^{-1}.
$$
By applying $\eta^2 u$ as a test function to \eqref{230130_eq1} and then, using both H\"older's and Young's inequalities, we obtain for $\theta\in (0,1)$ that 
$$
\begin{aligned}
\int_{\Omega} \eta^2 |Du|^2\,dx
&\le \theta\int_{\Omega_{R/8}} |p|^2\,dx\\
&\quad + \frac{C}{R^2}\int_{\Omega_{R/8}} |u|^2\,dx+C\int_{\Omega_{R/8}} \big(|f^\alpha|^2+|g|^2\big)\,dx,
\end{aligned}
$$
where  by H\"older's inequality and Lemma \ref{240213_lem4},  
\begin{equation}		\label{230227_eq1}
\frac{1}{R^2}\int_{\Omega_{R/8}} |u|^2\,dx\le  C R^{2-4/q}\|u\|^2_{L^{q^*}(\Omega_{R/8})} \le CR^{2-4/q}\|Du\|_{L^q(\Omega_R)}^2.
\end{equation}
Hence we get 
$$
\begin{aligned}
\int_{\Omega} \eta^2 |Du|^2\,dx
&\le \theta\int_{\Omega_{R}} |p|^2\,dx\\
&\quad +CR^{2-4/q}\bigg(\int_{\Omega_{R}} |Du|^q\,dx\bigg)^{2/q}+C\int_{\Omega_{R}} \big(|f^\alpha|^2+|g|^2\big)\,dx,
\end{aligned}
$$
where $C=C(\lambda, \kappa, q, \theta)$. 
This gives the desired estimate for $Du$.

Next, to get the estimate for $p$,  we apply Lemma \ref{230126_lem1} to find $v\in W^{1,2}_{\cD}(\Omega)^2$ satisfying that 
$$
\operatorname{div} v=p \chi_{\Omega_{R/16}} \, \text{ in }\, \Omega
$$
and
\begin{equation}		\label{230130_eq3}
\|Dv\|_{L^2(\Omega)}\le C \|p\chi_{\Omega_{R/16}}\|_{L^2(\Omega)}=C \|p\|_{L^2(\Omega_{R/16})},
\end{equation}
where $C=C(R_0, \kappa, \operatorname{diam}(\Omega))$.
By testing \eqref{230130_eq1} with $\eta^2 v$,  we obtain
\begin{equation}		\label{230227_eq2}
\begin{aligned}
&\int_{\Omega} \eta^2 |p|^2\,dx\\
&\le C_\varepsilon \int_{\Omega} \eta^2|Du|^2\,dx+C_\varepsilon R^{2-4/q} \bigg(\int_{\Omega_R} |p|^q\,dx\bigg)^{2/q}+C_\varepsilon\int_{\Omega_{R}} |f^\alpha|^2\,dx \\
&\quad +\frac{\varepsilon}{R^{4-4/q}}\bigg(\int_{\Omega_{R/8}}  |v|^{q/(q-1)}\,dx\bigg)^{2(q-1)/q}+\varepsilon\int_{\Omega_{R}} |Dv|^2\,dx,
\end{aligned}
\end{equation}
where $\varepsilon\in (0,1)$ and $C_\varepsilon=C_{\varepsilon}(\lambda, \varepsilon,q)$.
By Lemma \ref{240213_lem4} with $q/(q-1)$ in place of $q^*$, we have 
$$
\begin{aligned}
\frac{\varepsilon}{R^{4-4/q}}\bigg(\int_{\Omega_{R/8}}  |v|^{q/(q-1)}\,dx\bigg)^{2(q-1)/q}
&\le \frac{C\varepsilon}{R^{4-4/q}}\|Dv\|_{L^{2q/(3q-2)}(\Omega_R)}^2\\
&\le C \varepsilon \|Dv\|_{L^2(\Omega_R)}^2.
\end{aligned}
$$
Hence by \eqref{230130_eq3}, we can absorb the last two terms on the right-hand side of \eqref{230227_eq2} to get 
$$
\int_{\Omega} \eta^2 |p|^2\,dx
\le C \int_{\Omega} \eta^2|Du|^2\,dx+C R^{2-4/q} \bigg(\int_{\Omega_R} |p|^q\,dx\bigg)^{2/q} +C\int_{\Omega_{R}} |f^\alpha|^2\,dx.
$$
This implies the desired estimate for $p$.
The lemma is proved.
\end{proof}

By Lemma \ref{230130_lem1} and Gehring's lemma, we get the  following reverse H\"older's inequality.

\begin{lemma}		\label{230130_lem2}
There exists $q_0=q_0(\lambda, R_0, \kappa, \operatorname{diam}(\Omega))>2$ such that if 
$(u, p)\in W^{1,2}_{\cD}(\Omega)^2\times L^2(\Omega)$ satisfies  \eqref{230130_eq1} with   $f^\alpha\in L^\infty(\Omega)^2$ and $g\in L^\infty(\Omega)$,
then for any $q\in [2, q_0]$, $x_0\in \bR^2$, and $R\in (0, R_0]$, we have 
$$
\begin{aligned}
(|Du|^q+|p|^q)_{\Omega_{R/2}(x_0)}^{1/q} \le C (|Du|^2+|p|^2)_{\Omega_{R}(x_0)}^{1/2}+C (|f^\alpha|^q+|g|^q)_{\Omega_{R}(x_0)}^{1/q},
\end{aligned}
$$
where $C=C(\lambda, R_0, \kappa, \operatorname{diam}(\Omega),q)$.
\end{lemma}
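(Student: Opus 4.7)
The plan is to recast Lemma \ref{230130_lem1} as a reverse H\"older inequality with a small hole-filling term, and then invoke Gehring's lemma to gain higher integrability.

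Set $F=(|Du|^2+|p|^2)^{1/2}$ and $H=(|f^\alpha|^2+|g|^2)^{1/2}$, and fix some $q_1\in(1,2)$. Since $(|p|^2)_{\Omega_R(x_0)}\le (F^2)_{\Omega_R(x_0)}$ and $|Du|^{q_1}+|p|^{q_1}$ is pointwise comparable to $F^{q_1}$, Lemma \ref{230130_lem1} produces, for every $\theta\in(0,1)$, $x_0\in\overline{\Omega}$, and $R\in(0,R_0]$, the estimate
$$
\big(F^2\big)_{\Omega_{R/16}(x_0)}^{1/2}
\le \theta\,\big(F^2\big)_{\Omega_R(x_0)}^{1/2}
+C_\theta\,\big(F^{q_1}\big)_{\Omega_R(x_0)}^{1/q_1}
+C_\theta\,\big(H^2\big)_{\Omega_R(x_0)}^{1/2}.
$$
Choosing $\theta$ small enough, depending only on $\lambda,R_0,\kappa,\operatorname{diam}(\Omega)$, this becomes the canonical hypothesis of the hole-filling version of Gehring's lemma for reverse H\"older inequalities.

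Applying that lemma yields an exponent $q_0>2$ such that, for every $q\in(2,q_0]$, there exists $C_q$ with
$$
\big(F^{q}\big)_{\Omega_{R/32}(x_0)}^{1/q}
\le C_q\,\big(F^2\big)_{\Omega_{R/2}(x_0)}^{1/2}
+C_q\,\big(H^{q}\big)_{\Omega_{R/2}(x_0)}^{1/q}
$$
for all $x_0\in\overline{\Omega}$ and $R\in(0,R_0]$. A short Vitali-type covering argument promotes this to the stated form
$$
\big(F^{q}\big)_{\Omega_{R/2}(x_0)}^{1/q}
\le C_q\,\big(F^2\big)_{\Omega_R(x_0)}^{1/2}
+C_q\,\big(H^{q}\big)_{\Omega_R(x_0)}^{1/q},
$$
now valid for all $x_0\in\bR^2$ (using that $|\Omega_r(x)|\approx r^2$ by Assumption \ref{A1} whenever $\Omega_r(x)\ne\emptyset$), and the edge case $q=2$ follows trivially from the same measure-density bound.

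The main conceptual step is observing that Lemma \ref{230130_lem1} is already in hole-filling form after one majorizes $(|p|^2)_{\Omega_R}$ by $(F^2)_{\Omega_R}$; the remainder is bookkeeping. The most delicate technical point is justifying the use of Gehring's lemma with the intrinsic neighborhoods $\Omega_R(x_0)$ in place of Euclidean balls, but this is standard in the Reifenberg-flat setting, since the uniform two-sided bound $c R^2\le|\Omega_R(x_0)|\le\pi R^2$ ensured by Assumption \ref{A1} (with $\gamma\le 1/96$) makes the Calder\'on--Zygmund machinery underlying Gehring's proof go through verbatim.
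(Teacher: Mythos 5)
Your proposal is correct and follows essentially the same route as the paper, which derives this lemma directly from Lemma \ref{230130_lem1} together with Gehring's lemma (in the Giaquinta--Modica form with the absorbable $\theta$-term) and offers no further detail. Your added remarks on the measure-density bound $|\Omega_R(x_0)|\ge cR^2$ from Assumption \ref{A1}, the passage from $x_0\in\overline{\Omega}$ to $x_0\in\bR^2$ by covering, and the trivial case $q=2$ are exactly the standard points one would need to make this rigorous.
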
		

The proposition below is about the $L^q$-estimate and solvability of the mixed problem when $q$ is close to $2$, which follows from Lemma \ref{230130_lem2} with duality; see \cite[Lemma 4.4]{MR3906316}. 

\begin{proposition}		\label{230130_prop1}
Let $q\in [q_0', q_0]$, where $q_0=q_0(\lambda, R_0, \kappa, \operatorname{diam}(\Omega))>2$ is the constant from Lemma \ref{230130_lem2} and $q_0'$ is the conjugate exponent of $q_0$.
Then for any $f^\alpha\in L^q(\Omega)^2$ and $g\in L^q(\Omega)$, there exist a unique $(u, p)\in W^{1,q}_{\cD}(\Omega)^2\times L^q(\Omega)$ satisfying \eqref{230130_eq1}.
Moreover, we have 
$$
\|Du\|_{L^q(\Omega)}+\|p\|_{L^q(\Omega)}\le C \|f^\alpha\|_{L^q(\Omega)}+C\|g\|_{L^q(\Omega)},
$$
where $C=C(\lambda, R_0, \kappa, \operatorname{diam}(\Omega), q)$.
\end{proposition}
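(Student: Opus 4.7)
The plan is to split the range $[q_0', q_0]$ into two halves, handling $q \in [2, q_0]$ by self-improvement from the $L^2$ theory of Lemma \ref{230127_lem2} using the reverse H\"older inequality of Lemma \ref{230130_lem2}, and then $q \in [q_0', 2)$ by duality combined with an $L^\infty$-approximation argument.

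For $q \in [2, q_0]$, I would begin by observing that since $\Omega$ is bounded, $f^\alpha \in L^q(\Omega)^2 \subset L^2(\Omega)^2$ and $g \in L^q(\Omega) \subset L^2(\Omega)$, so Lemma \ref{230127_lem2} produces a unique weak solution $(u,p) \in W^{1,2}_{\cD}(\Omega)^2 \times L^2(\Omega)$ with the $L^2$ bound $\|Du\|_{L^2(\Omega)} + \|p\|_{L^2(\Omega)} \le C(\|f^\alpha\|_{L^q(\Omega)} + \|g\|_{L^q(\Omega)})$. Next, cover $\overline{\Omega}$ by finitely many disks $B_{R_0/2}(x_i)$, apply Lemma \ref{230130_lem2} on each pair $(\Omega_{R_0/2}(x_i),\Omega_{R_0}(x_i))$, raise to the $q$-th power, and sum to obtain $\|Du\|_{L^q(\Omega)} + \|p\|_{L^q(\Omega)} \le C(\|f^\alpha\|_{L^q(\Omega)} + \|g\|_{L^q(\Omega)})$. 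Uniqueness comes for free, because $W^{1,q}_{\cD}(\Omega) \subset W^{1,2}_{\cD}(\Omega)$ when $q \ge 2$ (bounded domain), so any $L^q$ solution is an $L^2$ solution and Lemma \ref{230127_lem2} applies.

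For $q \in [q_0', 2)$, I would proceed by duality. For any $\Phi^\alpha \in L^{q'}(\Omega)^2$ and $\Psi \in L^{q'}(\Omega)$, with $q' \in (2, q_0]$, the case just proved, applied to the adjoint operator $\cL^*$ (which satisfies the same ellipticity bounds as $\cL$), yields a solution $(v, \pi) \in W^{1,q'}_{\cD}(\Omega)^2 \times L^{q'}(\Omega)$ of the adjoint mixed problem with data $(\Phi^\alpha,\Psi)$ and $\|Dv\|_{L^{q'}(\Omega)} + \|\pi\|_{L^{q'}(\Omega)} \le C(\|\Phi^\alpha\|_{L^{q'}(\Omega)} + \|\Psi\|_{L^{q'}(\Omega)})$. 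Starting from the $L^2$ solution $(u,p)$ associated with $L^\infty$ data $f^\alpha, g$, testing the $u$-equation against $v$ and the $v$-equation against $u$ and subtracting yields the pairing identity
$$
\int_\Omega \Phi^\alpha \cdot D_\alpha u\,dx + \int_\Omega p\,\Psi\,dx = \int_\Omega f^\alpha \cdot D_\alpha v\,dx + \int_\Omega \pi\,g\,dx.
$$
H\"older's inequality on the right, followed by the dual characterization of the $L^q$ norm on the left by ranging over $(\Phi^\alpha,\Psi)$, gives the a priori estimate $\|Du\|_{L^q(\Omega)} + \|p\|_{L^q(\Omega)} \le C(\|f^\alpha\|_{L^q(\Omega)} + \|g\|_{L^q(\Omega)})$.

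With this a priori estimate in hand, existence for general $L^q$ data follows by approximating $f^\alpha, g$ by $L^\infty$ functions $f^\alpha_n, g_n$ converging in $L^q$, solving at the $L^2$ level via Lemma \ref{230127_lem2}, and extracting a weak limit in $W^{1,q}_{\cD}(\Omega)^2 \times L^q(\Omega)$ which inherits the weak formulation and the estimate; uniqueness in $L^q$ follows by applying the same pairing identity to a solution with zero data. The main obstacle, and the step requiring care, is justifying that the adjoint solution $(v,\pi)$ is admissible as a test pair in the weak formulation of $(u,p)$ even though $v$ lies in $W^{1,q'}_{\cD}$ rather than $C^\infty_{\cD}$. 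This is handled by a density argument: every integral entering the pairing identity is controlled by H\"older under the dualities $W^{1,q}_{\cD} \times W^{1,q'}_{\cD}$ and $L^q \times L^{q'}$, so the weak formulation extends continuously to test functions in $W^{1,q'}_{\cD}$.
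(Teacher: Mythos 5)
Your proof is correct and follows essentially the same route as the paper, which obtains the proposition from Lemma \ref{230130_lem2} combined with a duality argument (the paper simply cites \cite[Lemma 4.4]{MR3906316} for this). The only cosmetic point: Lemma \ref{230130_lem2} is stated for $L^\infty$ data, so in the case $q\in[2,q_0]$ you should also insert the approximation step you already carry out for $q<2$; this changes nothing substantive.
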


When $q\in (2, q_0]$, we can extend the result in Proposition \ref{230130_prop1} as follows.

\begin{corollary}		\label{230131_cor1}
Let $q\in (2, q_0]$, where $q_0=q_0(\lambda, R_0, \kappa, \operatorname{diam}(\Omega))>2$ is the constant from Lemma \ref{230130_lem2}.
Then for any $(u, p)\in W^{1,q}_{\cD}(\Omega)^q\times L^q(\Omega)$ satisfying
\begin{equation}		\label{230131_eq3}
\begin{cases}
\operatorname{div} u=g &\text{in }\, \Omega,\\
\cL u+\nabla p=f+D_\alpha f^\alpha &\text{in }\, \Omega,\\
\cB u+p \nu=\nu_\alpha f^\alpha  & \text{on }\, \cN,\\
u=0 & \text{on }\, \cD,
\end{cases}
\end{equation}
where $f\in L^{2q/(2+q)}(\Omega)^2$, $f^\alpha\in L^q(\Omega)^2$, and $g\in L^q(\Omega)$, we have 
\begin{equation}		\label{230131_eq2}
\|Du\|_{L^q(\Omega)}+\|p\|_{L^q(\Omega)}\le C\|f\|_{L^{2q/(2+q)}(\Omega)}+C \|f^\alpha\|_{L^q(\Omega)}+C\|g\|_{L^q(\Omega)},
\end{equation}
where $C=C(\lambda, R_0, \kappa,  \operatorname{diam}(\Omega), q)$.
Moreover, for any $f\in L^{2q/(2+q)}(\Omega)^2$, $f^\alpha\in L^q(\Omega)^2$, and $g\in L^q(\Omega)$, there exist a unique $(u, p)\in W^{1,q}_{\cD}(\Omega)^2\times L^q(\Omega)$ satisfying \eqref{230131_eq3}.
\end{corollary}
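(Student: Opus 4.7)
The plan is to derive the a priori estimate \eqref{230131_eq2} from Proposition~\ref{230130_prop1} via a duality argument against the adjoint problem, and then obtain existence by approximation. Uniqueness is immediate from the $L^2$ theory: if the data vanish and $(u,p)\in W^{1,q}_{\cD}(\Omega)^2\times L^q(\Omega)$ with $q>2$, then since $\Omega$ is bounded, H\"older's inequality places $(u,p)$ in $W^{1,2}_{\cD}(\Omega)^2\times L^2(\Omega)$, and Lemma~\ref{230127_lem2} forces $u\equiv 0$ and $p\equiv 0$.

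For the a priori bound, set $q'=q/(q-1)\in[q_0',2)$ and fix $\Phi\in L^{q'}(\Omega)^{2\times 2}$ and $\Psi\in L^{q'}(\Omega)$. Applying Proposition~\ref{230130_prop1} to the adjoint operator $\cL^*$ (which satisfies the same ellipticity bounds as $\cL$) produces $(v,r)\in W^{1,q'}_{\cD}(\Omega)^2\times L^{q'}(\Omega)$ solving $\operatorname{div} v=\Psi$, $\cL^* v+\nabla r=D_\alpha\Phi^\alpha$ with the mixed boundary conditions, together with
\[
\|Dv\|_{L^{q'}(\Omega)}+\|r\|_{L^{q'}(\Omega)}\le C\bigl(\|\Phi\|_{L^{q'}(\Omega)}+\|\Psi\|_{L^{q'}(\Omega)}\bigr).
\]
Using $v$ as test function in the weak form of \eqref{230131_eq3} and $u$ as test function in the adjoint weak form (both justified by density of $C^\infty_{\cD}$), and subtracting the resulting identities, one obtains
\[
\int_\Omega\Phi^\alpha\cdot D_\alpha u\,dx+\int_\Omega p\Psi\,dx=-\int_\Omega f\cdot v\,dx+\int_\Omega f^\alpha\cdot D_\alpha v\,dx+\int_\Omega rg\,dx.
\]
Each term on the right is controlled by H\"older's inequality; the delicate $f$-term is handled via the Sobolev embedding $W^{1,q'}_{\cD}(\Omega)\hookrightarrow L^{(q')^\star}(\Omega)=L^{2q/(q-2)}(\Omega)$ from Lemma~\ref{240212_lem3}, which yields
\[
\Bigl|\int_\Omega f\cdot v\,dx\Bigr|\le\|f\|_{L^{2q/(q+2)}(\Omega)}\|v\|_{L^{2q/(q-2)}(\Omega)}\le C\|f\|_{L^{2q/(q+2)}(\Omega)}\|Dv\|_{L^{q'}(\Omega)}.
\]
Taking the supremum over $\Phi,\Psi$ with $\|\Phi\|_{L^{q'}}+\|\Psi\|_{L^{q'}}\le 1$ produces \eqref{230131_eq2}.

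For existence, I would approximate $f$ by $f_n\in C^\infty_c(\Omega)^2$ with $f_n\to f$ in $L^{2q/(q+2)}$. Since each $f_n$ lies in $L^q(\Omega)^2$, Lemma~\ref{230126_lem1} applied componentwise produces $F_n^{\cdot,i}\in W^{1,q}_{\cN}(\Omega)^2$ with $\operatorname{div} F_n^{\cdot,i}=f_n^i$; equivalently $D_\alpha F_n^\alpha=f_n$ and $F_n^\alpha$ vanishes on $\cN$. An integration by parts, using that $\varphi\in C^\infty_{\cD}$ vanishes on $\cD$ while $F_n^\alpha$ vanishes on $\cN$, shows that the weak formulation of \eqref{230131_eq3} with source $f_n+D_\alpha f^\alpha$ and mixed boundary data $\nu_\alpha f^\alpha$ is equivalent to its purely divergence-form version with source $D_\alpha(f^\alpha+F_n^\alpha)$. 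Proposition~\ref{230130_prop1} then supplies $(u_n,p_n)\in W^{1,q}_{\cD}(\Omega)^2\times L^q(\Omega)$ solving \eqref{230131_eq3} with $f$ replaced by $f_n$. Applying the a priori estimate just derived to $(u_n-u_m,p_n-p_m)$ yields
\[
\|D(u_n-u_m)\|_{L^q(\Omega)}+\|p_n-p_m\|_{L^q(\Omega)}\le C\|f_n-f_m\|_{L^{2q/(q+2)}(\Omega)},
\]
so the sequence is Cauchy in $W^{1,q}_{\cD}\times L^q$ and its limit solves the original problem.

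The main obstacle is the careful bookkeeping of the duality identity and verifying that the Sobolev conjugate $(q')^\star=2q/(q-2)$ is exactly the dual exponent of $2q/(q+2)$, so that the embedding of Lemma~\ref{240212_lem3} absorbs the $L^{2q/(q+2)}$-norm of $f$ through the $L^{q'}$-bound on $Dv$; this matching is precisely where the hypothesis $f\in L^{2q/(q+2)}$ becomes essential. A secondary technical point is ensuring that $v\in W^{1,q'}_{\cD}$ and $u\in W^{1,q}_{\cD}$ are admissible test functions in each other's weak formulations, which follows by density together with the continuity of both sides in the appropriate norms.
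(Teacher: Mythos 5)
Your proof is correct, but it takes a longer route than the paper's. The paper's proof is a one-step reduction: since $2q/(2+q)\in(1,2)$, Lemma \ref{230126_lem1} applies directly to $f$ and yields $F^\alpha\in W^{1,2q/(2+q)}_{\cN}(\Omega)^2$ with $D_\alpha F^\alpha=f$, and Lemma \ref{240212_lem3} gives $\|F^\alpha\|_{L^q(\Omega)}\le C\|DF^\alpha\|_{L^{2q/(2+q)}(\Omega)}\le C\|f\|_{L^{2q/(2+q)}(\Omega)}$ because the Sobolev conjugate of $2q/(2+q)$ is exactly $q$; the problem then becomes \eqref{230130_eq1} with $f^\alpha+F^\alpha$ in place of $f^\alpha$, so Proposition \ref{230130_prop1} delivers existence, uniqueness, and the estimate simultaneously. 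You instead prove the a priori bound by duality against the adjoint problem at exponent $q'\in[q_0',2)$, using the embedding $W^{1,q'}_{\cD}(\Omega)\hookrightarrow L^{2q/(q-2)}(\Omega)$ to absorb the $f$-term (the exponent bookkeeping is right: $(q')^\star=2q/(q-2)$ is dual to $2q/(q+2)$), and then you get existence by approximating $f$ in $L^{2q/(q+2)}$ by $L^q$ functions so that Lemma \ref{230126_lem1} can be invoked at exponent $q$. That approximation layer is avoidable, since Lemma \ref{230126_lem1} holds for every exponent in $(1,\infty)$ and can be applied to $f$ itself at exponent $2q/(2+q)$ --- which is the paper's shortcut. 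On the other hand, your duality argument is self-contained and makes explicit where the admissibility of $v\in W^{1,q'}_{\cD}$ and $u\in W^{1,q}_{\cD}$ as mutual test functions is used, and your reduction of uniqueness to the $L^2$ theory of Lemma \ref{230127_lem2} via H\"older on the bounded domain is a clean alternative to invoking the uniqueness clause of Proposition \ref{230130_prop1}. Both arguments are sound.
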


\begin{proof}
By Lemma \ref{230126_lem1},  there exists  
$F^\alpha\in W^{1,2q/(2+q)}_{\cN}(\Omega)^2$ such that 
$$
D_\alpha F^\alpha =f \,\text{ in }\, \Omega
$$
and  
\begin{equation}		\label{230131_eq1}
\|F^\alpha\|_{L^q(\Omega)}\le C\|DF^\alpha\|_{L^{2q/(2+q)}(\Omega)}\le C\|f\|_{L^{2q/(2+q)}(\Omega)},
\end{equation}
where the first inequality is due to Lemma \ref{240212_lem3}.
Note that the problem \eqref{230131_eq3} is equivalent to 
$$
\begin{cases}
\operatorname{div} u=g &\text{in }\, \Omega,\\
\cL u+\nabla p=D_\alpha (f^\alpha+F^\alpha) &\text{in }\, \Omega,\\
\cB u+p \nu=\nu_\alpha (f^\alpha+F^\alpha) & \text{on }\, \cN,\\
u=0 & \text{on }\, \cD.
\end{cases}
$$
Hence, by applying  Proposition \ref{230130_prop1} to the above problem and using \eqref{230131_eq1}, we get the desired result.
\end{proof}

\section{Proof of Theorem \ref{M1}}	\label{S4}

We adapt the arguments in the proof of \cite[Theorem 2.4]{CK2023}.
Set 
$q_0'=\frac{q_0}{q_0-1}$,
where $q_0=q_0(\lambda, R_0, \kappa, \operatorname{diam}(\Omega))>2$ is the constant from Lemma \ref{230130_lem2}.
We fix a smooth function $\Phi$ defined in $\bR^2$ such that 
$$
0\le \Phi\le 1, \quad \operatorname{supp}\Phi\subset B_1(0), \quad \int_{\bR^2}\Phi\,dx=1.
$$
Let $y\in \Omega$ and $\varepsilon\in (0, R_0]$, and denote
$$
\Phi^{\varepsilon, y}(x)=\varepsilon^{-2} \Phi((x-y)/\varepsilon).
$$
By Corollary \ref{230131_cor1}, for each $k\in \{1,2\}$, there exists a unique 
$$
(v, \pi)=(v^{\varepsilon, y, k}, \pi^{\varepsilon, y, k})\in W^{1,q_0}_{\cD}(\Omega)^{2}\times L^{q_0}(\Omega)
$$
satisfying 
\begin{equation}		\label{230206_eq1}
\begin{cases}
\operatorname{div} v=0 &\text{in }\, \Omega,\\
\cL v+\nabla \pi=-\Phi^{\varepsilon, y} e_k &\text{in }\, \Omega,\\
\cB v+ \pi \nu=0 & \text{on }\, \cN,\\
v=0 & \text{on }\, \cD,
\end{cases}
\end{equation}
where $e_k$ is the $k$th unit vector in $\bR^2$.
Moreover, for any $q\in (2, q_0]$, 
\begin{equation}		\label{230206_eq2}
\|Dv\|_{L^{q}(\Omega)}+\|\pi\|_{L^{q}(\Omega)}\le C\|\Phi^{\varepsilon,y}\|_{L^{2q/(2+q)}(\Omega)}\le C\varepsilon^{-1+2/q},
\end{equation}
where $C=C(\lambda, R_0, \kappa, \operatorname{diam}(\Omega),q)$.
By the Morrey-Sobolev embedding, there is a version $\tilde{v}$ of $v$ such that $\tilde{v}=v$ a.e. in $\Omega$ and $\tilde{v}$ is continuous in $\Omega$.
We define {\em the approximated Green function} 
$$
(G^\varepsilon(\cdot,y), \Pi^\varepsilon(\cdot,y))\in W^{1,q_0}_{\cD}(\Omega)^{2\times 2}\times L^{q_0}(\Omega)^{1\times 2}
$$
{\em for $\cL$} by 
$$
G^{\varepsilon}_{jk}(\cdot,y)=\tilde{v}_j=\tilde{v}^{\varepsilon, y, k}_{j} \, \text{ and }\, \Pi^\varepsilon_k(\cdot,y)=\pi=\pi^{\varepsilon, y, k}.
$$
Notice that for each $k\in \{1,2\}$ and $\varphi\in W^{1, q_0'}_{\cD}(\Omega)^2$, we have 
\begin{equation}		\label{230209_eq3a}
\int_\Omega A^{\alpha\beta} D_\beta G^{\varepsilon}_{\cdot k}(\cdot,y)\cdot D_\alpha \varphi\,dx+\int_\Omega \Pi^\varepsilon(\cdot,y)\operatorname{div}\varphi\,dx=\int_\Omega  \Phi^{\varepsilon, y} \varphi_k\,dx,
\end{equation}
where $G^{\varepsilon}_{\cdot k}(\cdot,y)$ is the $k$th column of $G^\varepsilon(\cdot,y)$.

To prove Theorem \ref{M1}, we will use the following lemma related to uniform estimates for $DG^\varepsilon(\cdot, y)$ and $\Pi^\varepsilon(\cdot, y)$.

\begin{lemma}		\label{230209_lem1}
Let $y\in \Omega$ and $R\in (0, R_0]$.
\begin{enumerate}[$(i)$]
\item
Let $1\le s<2$.
Then for any $x\in \Omega$ and $\varepsilon\in (0, R/8]$, we have 
$$
\|DG^\varepsilon(\cdot,y)\|_{L^{s}(\Omega_{R}(x))}+\|\Pi^\varepsilon(\cdot,y)\|_{L^{s}(\Omega_{R}(x))}\le C_sR^{-1+2/s}.
$$
\item
Let $2<s\le q_0$.
Then for any $\varepsilon\in (0, R_0]$, we have 
$$
\|DG^\varepsilon(\cdot,y)\|_{L^{s}(\Omega\setminus \overline{B_R(y)})}+\|\Pi^\varepsilon(\cdot,y)\|_{L^{s}(\Omega\setminus \overline{B_R(y)})}\le C_sR^{-1+2/s}.
$$
\item
For any $\varepsilon\in (0, R_0]$, we have 
$$
\|DG^\varepsilon(\cdot,y)\|_{L^{2, \infty}(\Omega)}+\|\Pi^\varepsilon(\cdot, y)\|_{L^{2, \infty}(\Omega)}\le C.
$$
\end{enumerate}
In the above, $C=C(\lambda, R_0, \kappa, \operatorname{diam}(\Omega))$ and $C_s$ depends also on $s$.
\end{lemma}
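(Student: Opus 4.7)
My plan is to prove the three parts in the order (ii) $\Rightarrow$ (iii) $\Rightarrow$ (i): the estimate in (iii) will follow from (ii) by a distribution-function argument, and (i) from (iii) by the layer-cake formula.

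For (ii), I would attack the $L^s$-estimate by duality against the adjoint mixed boundary value problem. Fix $s \in (2, q_0]$ and $s' = s/(s-1) \in [q_0', 2)$. Given $H \in L^{s'}(\Omega)^{2\times 2}$ supported in $\Omega \setminus \overline{B_R(y)}$ with $\|H\|_{L^{s'}} \le 1$, Proposition~\ref{230130_prop1} applied to $\cL^*$ produces an adjoint solution $(w, \sigma) \in W^{1, s'}_\cD(\Omega)^2 \times L^{s'}(\Omega)$ with data $D_\alpha H^\alpha$ and $\|Dw\|_{L^{s'}} + \|\sigma\|_{L^{s'}} \le C$. Cross-testing the weak formulation \eqref{230209_eq3a} of $G^\varepsilon_{\cdot k}$ against $w$ (valid since $s' \ge q_0'$) together with the adjoint formulation tested against $G^\varepsilon_{\cdot k}$, and using that both fields are divergence-free, produces the identity
\[
\int_\Omega H^\alpha \cdot D_\alpha G^\varepsilon_{\cdot k}(\cdot, y)\,dz = \int_\Omega \Phi^{\varepsilon, y} w_k\,dz,
\]
which reduces matters to showing $\|w\|_{L^\infty(B_\varepsilon(y))} \le CR^{1-2/s'} = CR^{-1+2/s}$. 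Since $H$ vanishes on $B_R(y)$, the pair $(w, \sigma)$ solves the homogeneous mixed problem there; I would iterate Lemma~\ref{230130_lem1} with the self-improving Gehring inequality of Lemma~\ref{230130_lem2} to lift local regularity past $L^2$, yielding $\|Dw\|_{L^{q_0}(\Omega_{R/8}(y))} \le CR^{2/q_0 - 2/s'}$. Applying Lemma~\ref{240213_lem2} \eqref{230804_eq3a} on $\Omega_{R/64}(y)$ then gives the desired $L^\infty$-bound: its first term is $\le CR^{1-2/s'}$, and its second, after Lemma~\ref{240212_lem3} yields $\|w\|_{L^{(s')^\star}(\Omega)} \le C\|Dw\|_{L^{s'}(\Omega)} \le C$ and H\"older produces $\|w\|_{L^1(\Omega_{R/64}(y))} \le CR^{3-2/s'}$, is also $\le CR^{1-2/s'}$. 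This settles $\varepsilon \le R/64$; for $R/64 < \varepsilon \le R_0$, the global estimate \eqref{230206_eq2} together with H\"older directly gives $\|DG^\varepsilon\|_{L^s(\Omega)} \le C\varepsilon^{-1+2/s} \le CR^{-1+2/s}$ (using $-1+2/s < 0$). The pressure bound follows analogously, with Lemma~\ref{230126_lem1} producing test vector fields of prescribed divergence, and the H\"older estimate \eqref{230209_eq5b} from Lemma~\ref{240213_lem2} \eqref{210804@eq3}.

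For (iii), the weak-$L^2$ characterization reduces matters to a uniform bound on $t|\{|DG^\varepsilon|>t\}|^{1/2}$; for $t \le 1/R_0$ this is trivial, and for $t > 1/R_0$ setting $R = 1/t$ and splitting $\Omega$ into $B_R(y)$ (contribution $\le C/t^2$) and $\Omega \setminus \overline{B_R(y)}$ (bounded via Chebyshev and (ii) at $s = q_0$ by $t^{-q_0}CR^{2-q_0} = C/t^2$) settles the matter. For (i), the layer-cake identity $\|DG^\varepsilon\|_{L^s(\Omega_R(x))}^s = s\int_0^\infty t^{s-1}|\{|DG^\varepsilon|>t\} \cap \Omega_R(x)|\,dt$, split at $t = 1/R$ with $|\Omega_R(x)| \le CR^2$ below and the weak-$L^2$ bound above, yields $CR^{2-s}$ from each contribution. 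The hardest step will be (ii), specifically upgrading the adjoint pair $(w, \sigma)$ from the sub-$L^2$ regularity of Proposition~\ref{230130_prop1} (since $s' < 2$) to $W^{1, q_0} \times L^{q_0}$ locally on $\Omega_R(y)$; the self-improving character of Gehring's lemma enables this, but the iteration must be carried out carefully at points of $\Gamma$ where the Dirichlet and conormal boundary portions meet---the technical refinement over the pure-boundary-condition setting in \cite{CK2023}.
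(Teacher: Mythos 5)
Your overall architecture for parts (ii) and (iii) matches the paper's (duality against the adjoint problem for (ii), the distribution-function splitting at $R=t^{-1}$ for (iii)), and your derivation of (i) from (iii) by the layer-cake formula is a correct and in fact cleaner alternative to the paper's direct duality proof of (i): splitting $s\int_0^\infty t^{s-1}|A_t\cap\Omega_R(x)|\,dt$ at $t=1/R$ and using $|A_t\cap\Omega_R(x)|\le\min(CR^2,Ct^{-2})$ gives $C_sR^{2-s}$ with $C_s\sim(2-s)^{-1}$, which is the stated bound (and even removes the restriction $\varepsilon\le R/8$). Your treatment of the regime $\varepsilon>R/64$ via \eqref{230206_eq2}, and the bookkeeping of the two terms in \eqref{230804_eq3a}, are also consistent with the paper.

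However, the key step of (ii) --- upgrading the adjoint pair $(w,\sigma)\in W^{1,s'}_{\cD}(\Omega)^2\times L^{s'}(\Omega)$, $s'<2$, to $L^{q_0}$ regularity on $\Omega_{R/8}(y)$ where the data vanish --- has a genuine gap as proposed. You cannot ``iterate Lemma \ref{230130_lem1} with the self-improving Gehring inequality of Lemma \ref{230130_lem2}'': both of those lemmas take as hypothesis that the solution already lies in $W^{1,2}_{\cD}(\Omega)^2\times L^2(\Omega)$, and Gehring's lemma only self-improves \emph{upward from} an exponent at which integrability is already known (from $L^2$ to $L^{2+\epsilon}$); it provides no mechanism for passing from $L^{s'}$ with $s'<2$ up to $L^2$ in the first place. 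The paper's mechanism is different and is the crux of the argument: one multiplies by a cutoff $\eta$ supported in $B_R(y)$ (where $H\equiv0$), observes that $(\eta w,\eta\sigma)$ solves a \emph{global} mixed problem \eqref{230207@eq3c} whose data $F\in L^{2s/(2+s)}$, $F^\alpha\in L^s$, $H\in L^s$ are controlled by $\|Dw\|_{L^{s'}(\Omega)}+\|\sigma\|_{L^{s'}(\Omega)}$ via H\"older and the Sobolev embedding of Lemma \ref{240212_lem3}, and then invokes the solvability and uniqueness of Corollary \ref{230131_cor1} at the exponent $s\in(2,q_0]$ (together with uniqueness in the class $W^{1,s'}$) to conclude that $(\eta w,\eta\sigma)$ actually belongs to $W^{1,s}\times L^s$ and satisfies \eqref{230131_eq2}. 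This duality/uniqueness transfer is what replaces the (unavailable) Caccioppoli--Gehring iteration; it also forces the case split $s\le4$ versus $s>4$ in checking the exponent relations $2s/(2+s)\lessgtr s'$ and $s\lessgtr 2s'/(2-s')$, which your sketch does not address. Without this step your claimed bound $\|Dw\|_{L^{q_0}(\Omega_{R/8}(y))}\le CR^{2/q_0-2/s'}$ is unsupported.
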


\begin{proof}
In this proof, we denote  
$$
(v, \pi)=(G^\varepsilon_{\cdot k}(\cdot, y), \Pi^\varepsilon_k(\cdot, y)).
$$
Let $f^\alpha\in L^\infty(\Omega)^2$ and $g\in L^\infty(\Omega)$.
By Proposition \ref{230130_prop1}, there exists $(u, p)\in W^{1,q_0}(\Omega)^2\times L^{q_0}(\Omega)$ satisfying 
\begin{equation}		\label{230207_eq6}
\begin{cases}
\operatorname{div} u=g &\text{in }\, \Omega,\\
\cL^* u+\nabla p=D_\alpha f^\alpha &\text{in }\, \Omega,\\
\cB^* u+p \nu=\nu_\alpha f^\alpha  & \text{on }\, \cN,\\
u=0 & \text{on }\, \cD.
\end{cases}
\end{equation}
Moreover, for any $q\in [q_0', q_0]$,  we have 
\begin{equation}		\label{230207@eq1}
\|Du\|_{L^{q}(\Omega)}+\|p\|_{L^{q}(\Omega)}\le C\big(\|f^\alpha\|_{L^{q}(\Omega)}+\|g\|_{L^{q}(\Omega)}\big),
\end{equation}
where $C=C(\lambda, R_0, \kappa, \operatorname{diam}(\Omega),q)$.
By applying $u$ and $v$ as test functions to \eqref{230206_eq1} and \eqref{230207_eq6}, respectively, we obtain that 
\begin{equation}		\label{230209_eq1}
\int_\Omega D_\alpha v\cdot f^\alpha\,dz+\int_\Omega \pi g\,dz=\int_\Omega \Phi^{\varepsilon, y} u_k\,dz.
\end{equation}

Now, we prove the assertion $(i)$.
Thanks to H\"older's inequality, it suffices to consider the case when $q_0'\le s<2$.
Suppose that $f^\alpha$ and $g$ have compact supports in $\Omega_R(x)$.
Since $\varepsilon\le R/8$, \eqref{230209_eq1} implies  
$$
\bigg|\int_{\Omega_{R}(x)} D_\alpha v\cdot f^\alpha\,dz+\int_{\Omega_{R}(x)} \pi g\,dz\bigg|\le C\|u\|_{L^\infty(\Omega_{R/8}(y))}.
$$
Then by \eqref{230804_eq3a} with $q=s':=s/(s-1)>2$, H\"older's inequality, and Lemma \ref{240212_lem3}, we obtain 
\begin{align}
\nonumber
&\bigg|\int_{\Omega_{R}(x)} D_\alpha v\cdot f^\alpha\,dz+\int_{\Omega_{R}(x)} \pi g\,dz\bigg|\\
\nonumber
&\le C R^{1-2/s'}\|Du\|_{L^{s'}(\Omega_R(y))}+CR^{1-2/s}\|u\|_{L^{2s/(2-s)}(\Omega_{R/8}(y))}\\
\label{230209_eq1a}
&\le CR^{1-2/s'}\|Du\|_{L^{s'}(\Omega)}+CR^{1-2/s}\|Du\|_{L^{s}(\Omega)}.
\end{align}
Note that \eqref{230207@eq1} holds for both $q=s$ and $q=s'$ since $q_0'\le s< s'\le q_0$.
Thus,  
$$
\begin{aligned}
&\bigg|\int_{\Omega_{R}(x)} D_\alpha v\cdot f^\alpha\,dz+\int_{\Omega_{R}(x)} \pi g\,dz\bigg|\\
&\le CR^{1-2/s'}\big(\|f^\alpha\|_{L^{s'}(\Omega_{R}(x))}+\|g\|_{L^{s'}(\Omega_{R}(x))}\big)\\
&\quad +CR^{1-2/s}\Big(\|f^\alpha\|_{L^{s}(\Omega_{R}(x))}+\|g\|_{L^{s}(\Omega_{R}(x))}\Big)\\
& \le CR^{1-2/s'}\big(\|f^\alpha\|_{L^{s'}(\Omega_{R}(x))}+\|g\|_{L^{s'}(\Omega_{R}(x))}\big).
\end{aligned}
$$
By the duality we obtain the desired estimate.

We next prove the assertion $(ii)$.
Thanks to \eqref{230206_eq2}, it suffices to consider the case of $\varepsilon\le R/16$.
Assume that $f^\alpha$ and $g$ have compact supports in $\Omega\setminus \overline{B_R(y)}$.
We consider the following two cases:
$$
s\le 4, \quad s>4.
$$
\begin{enumerate}[i.]
\item
$s\le 4$.
In this case, it holds that 
$$
\frac{2s}{2+s}\le s', \quad s\le \frac{2s'}{2-s'},
$$
where $s':=s/(s-1)$.
Similarly as in \eqref{230209_eq1a}, by \eqref{230804_eq3a} with $q=s>2$, we have that (using $\varepsilon\le R/16$)
\begin{align}
\nonumber
&\bigg|\int_{\Omega\setminus \overline{B_{R}(y)}} D_\alpha v\cdot f^\alpha\,dz+\int_{\Omega\setminus \overline{B_{R}(y)}} \pi g\,dz\bigg|\\
\nonumber
&\le \|u\|_{L^\infty(\Omega_{R/16}(y))}\\
\nonumber
&\le CR^{1-2/s}\|Du\|_{L^s(\Omega_{R/2}(y))}+CR^{1-2/s'}\|u\|_{L^{2s'/(2-s')}(\Omega)}\\
\label{230208_eq1}
&\le CR^{1-2/s}\|Du\|_{L^s(\Omega_{R/2}(y))}+CR^{1-2/s'}\|Du\|_{L^{s'}(\Omega)}.
\end{align}
Let $\eta$ be an infinitely differentiable function in $\bR^2$ such that 
$$
0\le \eta \le 1, \quad \eta\equiv 1 \, \text{ in }\, B_{R/2}(y), \quad \operatorname{supp}\eta \subset B_R(y), \quad |\nabla \eta|\le CR^{-1}.
$$
Since $f^\alpha\equiv g\equiv 0$ in $\Omega_R(y)$, 
$(\eta u, \eta p)$ satisfies 
\begin{equation}		\label{230207@eq3c}
\begin{cases}
\operatorname{div} (\eta u)=H &\text{in }\, \Omega,\\
\cL^* (\eta u)+\nabla (\eta p)=F+D_\alpha F^\alpha &\text{in }\, \Omega,\\
\cB^* (\eta u)+(\eta p) \nu=\nu_\alpha F^\alpha  & \text{on }\, \cN,\\
\eta u=0 & \text{on }\, \cD,
\end{cases}
\end{equation}
where 
$$
F=D_\alpha \eta (A^{\beta \alpha})^\top D_\beta u +p \nabla \eta, \quad F^\alpha=D_\beta \eta(A^{\beta\alpha})^\top u , \quad H=\nabla \eta \cdot u.
$$
From \eqref{230131_eq2} with $q=s$ applied to \eqref{230207@eq3c}, H\"older's inequality, and Lemma \ref{240212_lem3}, it follows that 
\begin{align}
\nonumber
&\|Du\|_{L^{s}(\Omega_{R/2}(y))}+\|p\|_{L^s(\Omega_{R/2}(y))}\\
\nonumber
&\le C  \|F\|_{L^{2s/(2+s)}(\Omega)}+C\|F^\alpha\|_{L^s(\Omega)}+C\|H\|_{L^s(\Omega)}\\
\nonumber
&\le C R^{-1}\big(\|Du\|_{L^{2s/(2+s)}(\Omega_R(y))}+\|p\|_{L^{2s/(2+s)}(\Omega_R(y))}\big)+C R^{-1} \|u\|_{L^s(\Omega_R(y))}\\
\nonumber
&\le C R^{2/s-2/s'} \big(\|Du\|_{L^{s'}(\Omega)}+\|p\|_{L^{s'}(\Omega)}\big)+CR^{2/s-2/s'}\|u\|_{L^{2s'/(2-s')}(\Omega)}\\
\label{230208_eq2a}
&\le C R^{2/s-2/s'} \big(\|Du\|_{L^{s'}(\Omega)}+\|p\|_{L^{s'}(\Omega)}\big).
\end{align}
Combining this together with \eqref{230208_eq1},   we obtain
$$
\begin{aligned}
&\bigg|\int_{\Omega\setminus \overline{B_{R}(y)}} D_\alpha v\cdot f^\alpha\,dz+\int_{\Omega\setminus \overline{B_{R}(y)}} \pi g\,dz\bigg|\\
&\le CR^{1-2/s'}\big(\|Du\|_{L^{s'}(\Omega)}+\|p\|_{L^{s'}(\Omega)}\big),
\end{aligned}
$$
and thus by using \eqref{230207@eq1} with $q=s'$ and the fact that $1-2/s'=-1+2/s$, we see that 
\begin{equation}		\label{230208_eq2b}
\begin{aligned}
&\bigg|\int_{\Omega\setminus \overline{B_{R}(y)}} D_\alpha v\cdot f^\alpha\,dz+\int_{\Omega\setminus \overline{B_{R}(y)}} \pi g\,dz\bigg|\\
&\le CR^{-1+2/s} \big(\|f^\alpha\|_{L^{s'}(\Omega\setminus \overline{B_R(y)})}+\|g\|_{L^{s'}(\Omega\setminus \overline{B_R(y)})}\big).
\end{aligned}
\end{equation}
By the duality we get the desired estimate.
\item
$s>4$.
In this case, we have 
$$
s'<\frac{2s}{2+s}, \quad \frac{2s'}{2-s'}<s.
$$
Similar to \eqref{230208_eq1}, using \eqref{230804_eq3a} with $q=2s'/(2-s')>2$, we obtain that  
$$
\begin{aligned}
&\bigg|\int_{\Omega\setminus \overline{B_{R}(y)}} D_\alpha v\cdot f^\alpha\,dz+\int_{\Omega\setminus \overline{B_{R}(y)}} \pi g\,dz\bigg|\\
&\le CR^{2-2/s'}\|Du\|_{L^{2s'/(2-s')}(\Omega_{R/2}(y))}+CR^{1-2/s'}\|Du\|_{L^{s'}(\Omega)}.
\end{aligned}
$$
By the same calculation used in deriving \eqref{230208_eq2a}, we have 
$$
\|Du\|_{L^{2s'/(2-s')}(\Omega_{R/2}(y))}\le  CR^{-1}\big(\|Du\|_{L^{s'}(\Omega)}+\|p\|_{L^{s'}(\Omega)}\big).
$$
Combining these together and utilizing \eqref{230207@eq1} with $q=s'$, we derive \eqref{230208_eq2b}, which implies the desired estimate.
\end{enumerate}
The assertion $(ii)$ is proved.

To prove the assertion $(iii)$, let
$$
A_t=\{x\in \Omega:|DG^\varepsilon(\cdot, y)>t\}, \quad t>0.
$$
If $t\le R_0^{-1}$, then 
\begin{equation}		\label{230209_eq2}
t|A_t|^{1/2}\le R_0^{-1} |\Omega|^{1/2}\le C,
\end{equation}
where $C=C(R_0, \operatorname{diam}(\Omega))$.
If $t>R_0^{-1}$, then by the estimate in the assertion $(ii)$ with $R=t^{-1}<R_0$ and $s=q_0$, we have 
$$
\big|A_t\setminus \overline{B_R(y)}\big|\le \frac{1}{t^{q_0}} \int_{A_t\setminus \overline{B_R(y)}} |DG^\varepsilon(\cdot, y)|^{q_0}\,dz\le Ct^{-2}, 
$$
where $C=C(\lambda, R_0, \kappa, \operatorname{diam}(\Omega), q_0)=C(\lambda, R_0, \kappa, \operatorname{diam}(\Omega))$.
Since 
$$
\big|A_t\cap \overline{B_R(y)}\big|\le CR^2=Ct^{-2},
$$
we get 
$$
t|A_t|^{1/2}\le C.
$$
Combining this together with \eqref{230209_eq2}, we obtain 
$$
\|DG^\varepsilon(\cdot, y)\|_{L^{2, \infty}(\Omega)}\le C.
$$
Similarly, we have the estimate for $\Pi^\varepsilon(\cdot, y)$.
The assertion $(iii)$ is proved.
\end{proof}

We are ready to prove Theorem \ref{M1}.

\begin{proof}[Proof of Theorem \ref{M1}]
Let us fix $s\in (1,2)$.
From Lemma \ref{230209_lem1} $(iii)$ combined with Lemma \ref{240212_lem3},  we have that for $y\in \Omega$ and $\varepsilon\in (0, R_0]$, 
$$
\|G^\varepsilon(\cdot, y)\|_{W^{1,s}_{\cD}(\Omega)}+\|\Pi^\varepsilon(\cdot, y)\|_{L^s(\Omega)}\le C,
$$
where $C=C(\lambda, R_0, \kappa, \operatorname{diam}(\Omega),s)$.
Hence by the weak compactness theorem, there exist a pair 
$$
(G(\cdot, y), \Pi(\cdot, y))\in W^{1,s}_{\cD}(\Omega)^{2\times 2}\times L^s(\Omega)^{1\times 2}
$$
and a sequence $\{\varepsilon_{\rho}\}_{\rho=1}^\infty$ tending to zero such that 
\begin{equation}		\label{230209_eq3}
G^{\varepsilon_\rho}(\cdot, y) \rightharpoonup G(\cdot, y) \, \text{ weakly in }\, W^{1, s}_{\cD}(\Omega)^{2\times 2}
\end{equation}
and
$$
\Pi^{\varepsilon_\rho}(\cdot, y) \rightharpoonup \Pi(\cdot, y) \, \text{ weakly in }\, L^{s}_{\cD}(\Omega)^{1\times 2}.
$$
Moreover, 
\begin{equation}		\label{230209_eq3b}
\|G(\cdot, y)\|_{W^{1,s}_{\cD}(\Omega)}+\|\Pi(\cdot, y)\|_{L^s(\Omega)}\le C.
\end{equation}

Now, we shall show that $(G, \Pi)$ satisfies the properties $(i)$--$(iii)$ in Definition \ref{D1} so that it is a unique Green function for $\cL$ in $\Omega$.
The property $(i)$ follows immediately from \eqref{230209_eq3b}.
By taking $\rho\to \infty$ in \eqref{230209_eq3a} with $\varepsilon_\rho$ in place of $\varepsilon$, we verify the property $(ii)$.
To see the property $(iii)$, let $(u, p)$ be a weak solution of the adjoint problem \eqref{230209_eq4}.
Testing \eqref{230209_eq4} and \eqref{230206_eq1} with $G^{\varepsilon_\rho}_{\cdot k}(\cdot, y)$ and $u$, respectively, 
we have 
$$
\int_{\Omega} \Phi^{\varepsilon, y}u_k\,dz=-\int_{\Omega} G^{\varepsilon_\rho}_{\cdot k} (\cdot, y)\cdot f\,dz+\int_\Omega D_\alpha G_{\cdot k}^{\varepsilon_\rho}(\cdot, y)\cdot f^\alpha\,dz+\int_\Omega \Pi_k^{\varepsilon_\rho}(\cdot, y)g\,dz,
$$
This shows that 
$$
u_k(y)=-\int_\Omega G_{\cdot k}(\cdot, y)\cdot f\,dz+\int_\Omega D_\alpha G_{\cdot k}(\cdot, y)\cdot f^\alpha\,dz+\int_\Omega \Pi_k(\cdot, y)g\,dz,
$$
provided that $y$ is in the Lebesgue set of $u_k$.
Hence, $(G, \Pi)$ satisfies the property $(ii)$, and thus it is the Green function for $\cL$ in $\Omega$.

We next prove the estimates \eqref{230209_eq5}--\eqref{230209_eq5c}.
Let $x,y\in \Omega$, $R\in (0, R_0]$, and $\varphi\in L^\infty(\Omega_R(x))$.
By Lemma \ref{230209_lem1} $(i)$ combined with \eqref{230209_eq3}, we have 
$$
\begin{aligned}
\bigg|\int_{\Omega_R(x)}DG(\cdot, y)\varphi\,dz\bigg|
&=\lim_{\rho\to \infty} \bigg|\int_{\Omega_R(x)} DG^{\varepsilon_\rho}(\cdot, y)\varphi\,dz\bigg|\\
&\le \limsup_{\rho\to \infty} \|DG^{\varepsilon_\rho}(\cdot, y)\|_{L^q(\Omega_R(x))}\|\varphi\|_{L^{q'}(\Omega_R(x))}\\
&\le C_q R^{-1+2/q}\|\varphi\|_{L^{q'}(\Omega_R(x))},
\end{aligned}
$$
where $1\le q<2$, $q'=q/(q-1)$, and $C_q=C_q(\lambda, R_0, \kappa, \operatorname{diam}(\Omega), q)$.
Hence by the duality, 
$$
\|DG(\cdot, y)\|_{L^q(\Omega_R(x))}\le C_q R^{-1+2/q}.
$$
Similarly we have 
$$
\|\Pi(\cdot, y)\|_{L^q(\Omega_R(x))}\le C_q R^{-1+2/q},
$$
and thus, the estimate \eqref{230209_eq5} holds.
By the same reasoning, Lemma \ref{230209_lem1} $(ii)$ yields  \eqref{230209_eq5a}, from which together with \eqref{210804@eq3} we verify \eqref{230209_eq5b}.
We also have \eqref{230209_eq5c} as in the proof of Lemma \ref{230209_lem1} $(iii)$.
Note that by \eqref{230209_eq5b}, 
\begin{equation}		\label{210802@eq3}
\bigg|G(x_0, y)-\dashint_{\Omega_{R/16}(x)} G(z, y)\,dz\bigg|\le C_0
\end{equation}
for all $x_0, x,y\in \Omega$ and $R\in (0, R_0]$ satisfying $|x-y|\ge R$ and $x_0\in \Omega_{R/16}(x)$, where $C_0=C_0(\lambda, R_0, \kappa, \operatorname{diam}(\Omega))$.

To prove \eqref{230209_eq5d}, let $x,y\in \Omega$ with $x\neq y$, and set $r=|x-y|$.
If $r\ge R_0/8$, then since 
\begin{equation}		\label{230209_eq6}
\|G(\cdot, y)\|_{L^1(\Omega)}\le C(\lambda, R_0, \kappa, \operatorname{diam}(\Omega)), 
\end{equation}
by \eqref{210802@eq3} with $x_0=x$ and $R=R_0/8$, we get 
$$
\begin{aligned}
|G(x,y)|
&\le \bigg|G(x, y)-\dashint_{\Omega_{R_0/128}(x)} G(z,y)\,dz\bigg|+\bigg|\dashint_{\Omega_{R_0/128}(x)} G(z,y)\,dz\bigg|\\
&\le C,
\end{aligned}
$$
which gives \eqref{230209_eq5d}.
Otherwise, i.e., if $r<R_0/8$, then we take a point $y_0\in \Omega$ and a chain of balls $B_{r_j}(z_j)$, $j\in \{1,\ldots, m\}$, where $z_j\in \Omega$ and $m\le N\log (R_0/r)$, such that (see \cite[Theorem A.3]{CK2023})
$$
|y-y_0|\ge R_0/8, \quad x\in B_{r_1}(z_1), \quad y_0\in B_{r_m}(z_m), 
$$
$$
|z_j-y|\ge 16r_j, \quad r_j\le R_0, \quad j\in \{1,\ldots, m\},
$$
and 
$$
\Omega\cap B_{r_j}(z_j)\cap B_{r_{j+1}}(z_{j+1})\neq \emptyset, \quad j\in \{1,\ldots, m-1\}.
$$
In this case, we may assume that $r_m\ge R_0/(8\cdot 17)$.
Denote $T_j=\Omega\cap B_{r_j}(z_j)$.
For each $j\in \{1,\ldots, m-1\}$, we fix $\tilde{z}_j\in T_j\cap T_{j+1}$.
Then by \eqref{210802@eq3} we have 
$$
\begin{aligned}
\bigg|\dashint_{T_j}G(z, y)\,dz\bigg|
&\le \bigg|\dashint_{T_j} G(z, y)\,dz-G(\tilde{z}_j, y)\bigg|\\
&\quad +\bigg|\dashint_{T_{j+1}} G(z,y)\,dz-G(\tilde{z}_{j}, y)\bigg|+\bigg|\dashint_{T_{j+1}} G(z, y)\,dz\bigg|\\
&\le 2C_0+\bigg|\dashint_{T_{j+1}} G(z, y)\,dz\bigg|.
\end{aligned}
$$
This shows that 
$$
\bigg|\dashint_{T_1} G(z, y)\,dz\bigg|\le 2(m-1)C_0+\bigg|\dashint_{T_{m}} G(z, y)\,dz\bigg|\le 2(m-1)C_0+C,
$$
where we used \eqref{230209_eq6} in the second inequality.
Therefore, by \eqref{210802@eq3} again and the fact that 
$$
m\le C \log \bigg(\frac{\operatorname{diam}(\Omega)}{|x-y|}\bigg),
$$
we have 
$$
\begin{aligned}
|G(x,y)|
&\le \bigg|G(x,y)-\dashint_{T_1} G(z, y)\,dz\bigg|+\bigg|\dashint_{T_1} G(z, y)\,dz\bigg|\\
&\le 2mC_0+C\\
&\le C\log \bigg(\frac{\operatorname{diam}(\Omega)}{|x-y|}\bigg)+C.
\end{aligned}
$$
This proves \eqref{230209_eq5d}.

To complete the proof of the theorem, it remains to verify the symmetry property \eqref{230209_eq5e}.
Let $(G^*, \Pi^*)$ be the Green function for the adjoint operator $\cL^*$ in $\Omega$, which can be derived in the same manner.
For $x,y\in \Omega$ with $x\neq y$, we set $r=|x-y|/2$.
Notice that  $\eta G^*(\cdot,x)$ and $(1-\eta)G^*(\cdot,x)$ can be applied to \eqref{230205_eq1} as test functions, where  $\eta$ is a smooth function in $\bR^2$ satisfying 
$$
\eta\equiv 0 \, \text{ in }\, B_{r/2}(x), \quad \eta\equiv 1 \, \text{ in }\, \bR^2\setminus B_r(x).
$$
Since $G^*(\cdot,x)$ is continuous  in $\Omega\setminus \{x\}$ and  
$$
G^*(\cdot,x)=\eta G^*(\cdot,x)+(1-\eta)G^*(\cdot, x),
$$
by testing the $l$th columns of $\eta G^*(\cdot, x)$ and $(1-\eta)G^*(\cdot ,x)$ to \eqref{230206_eq1}, we have  
$$
\int_\Omega A^{\alpha\beta}_{ij}D_\beta G_{jk}(\cdot,y)D_\alpha G^*_{il}(\cdot,x)\,dz=G^*_{kl}(y,x).
$$
Similarly, we have 
$$
\int_\Omega A^{\alpha\beta}_{ij}D_\beta G_{jk}(\cdot,y)D_\alpha G^*_{il}(\cdot,x)\,dz=G_{lk}(x,y).
$$
This shows the equality \eqref{230209_eq5e}.
The theorem is proved.
\end{proof}

\bibliographystyle{plain}


\begin{thebibliography}{10}

\bibitem{MR2663713}
Hammadi Abidi, Guilong Gui, and Ping Zhang.
\newblock On the decay and stability of global solutions to the 3{D}
  inhomogeneous {N}avier-{S}tokes equations.
\newblock {\em Comm. Pure Appl. Math.}, 64(6):832--881, 2011.

\bibitem{MR2465713}
TongKeun Chang and Hi~Jun Choe.
\newblock Estimates of the {G}reen's functions for the elasto-static equations
  and {S}tokes equations in a three dimensional {L}ipschitz domain.
\newblock {\em Potential Anal.}, 30(1):85--99, 2009.

\bibitem{MR3809039}
Jongkeun Choi, Hongjie Dong, and Doyoon Kim.
\newblock Conormal derivative problems for stationary {S}tokes system in
  {S}obolev spaces.
\newblock {\em Discrete Contin. Dyn. Syst.}, 38(5):2349--2374, 2018.

\bibitem{MR3877495}
Jongkeun Choi, Hongjie Dong, and Doyoon Kim.
\newblock Green functions of conormal derivative problems for {S}tokes system.
\newblock {\em J. Math. Fluid Mech.}, 20(4):1745--1769, 2018.

\bibitem{MR4261267}
Jongkeun Choi, Hongjie Dong, and Zongyuan Li.
\newblock Optimal regularity for a {D}irichlet-conormal problem in {R}eifenberg
  flat domain.
\newblock {\em Appl. Math. Optim.}, 83(3):1547--1583, 2021.

\bibitem{MR4440019}
Jongkeun Choi, Hongjie Dong, and Longjuan Xu.
\newblock Gradient estimates for {S}tokes and {N}avier-{S}tokes systems with
  piecewise {DMO} coefficients.
\newblock {\em SIAM J. Math. Anal.}, 54(3):3609--3635, 2022.

\bibitem{MR3906316}
Jongkeun Choi and Doyoon Kim.
\newblock Estimates for {G}reen functions of {S}tokes systems in two
  dimensional domains.
\newblock {\em J. Math. Anal. Appl.}, 471(1-2):102--125, 2019.

\bibitem{CK2023}
Jongkeun Choi and Doyoon Kim.
\newblock Green functions for stationary stokes systems with conormal
  derivative boundary condition in two dimensions.
\newblock {\em Math. Nachr.}, (to appear).

\bibitem{MR3693868}
Jongkeun Choi and Ki-Ahm Lee.
\newblock The {G}reen function for the {S}tokes system with measurable
  coefficients.
\newblock {\em Commun. Pure Appl. Anal.}, 16(6):1989--2022, 2017.

\bibitem{MR3758532}
Hongjie Dong and Doyoon Kim.
\newblock Weighted {$L_q$}-estimates for stationary {S}tokes system with
  partially {BMO} coefficients.
\newblock {\em J. Differential Equations}, 264(7):4603--4649, 2018.

\bibitem{MR0975121}
Eugene~Barry Fabes, Carlos~E. Kenig, and Gregory~C. Verchota.
\newblock The {D}irichlet problem for the {S}tokes system on {L}ipschitz
  domains.
\newblock {\em Duke Math. J.}, 57(3):769--793, 1988.

\bibitem{MR3959936}
Shu Gu and Jinping Zhuge.
\newblock Periodic homogenization of {G}reen's functions for {S}tokes systems.
\newblock {\em Calc. Var. Partial Differential Equations}, 58(3):Paper No. 114,
  46, 2019.

\bibitem{MR1676350}
Hiroya Ito.
\newblock Optimal {K}orn's inequalities for divergence-free vector fields with
  application to incompressible linear elastodynamics.
\newblock {\em Japan J. Indust. Appl. Math.}, 16(1):101--121, 1999.

\bibitem{MR2718661}
Kyungkeun Kang and Seick Kim.
\newblock Global pointwise estimates for {G}reen's matrix of second order
  elliptic systems.
\newblock {\em J. Differential Equations}, 249(11):2643--2662, 2010.

\bibitem{2005Kohr}
Mirela Kohr.
\newblock A mixed boundary value problem for the unsteady {S}tokes system in a
  bounded domain in $\mathbb{R}^n$.
\newblock {\em Eng. Anal. Bound. Elem.}, 29(10):936--943, 2005.

\bibitem{MR0425391}
{O}l'ga~Aleksandrovna Lady{\v{z}}enskaja and {V}sevolod~{A}lekseevich
  Solonnikov.
\newblock The unique solvability of an initial-boundary value problem for
  viscous incompressible inhomogeneous fluids.
\newblock {\em Zap. Nau\v cn. Sem. Leningrad. Otdel. Mat. Inst. Steklov.
  (LOMI)}, 52:52--109, 218--219, 1975.

\bibitem{MR0254401}
{O}l'ga~{A}leksandrovna Ladyzhenskaya.
\newblock {\em The mathematical theory of viscous incompressible flow}.
\newblock Second English edition, revised and enlarged. Translated from the
  Russian by Richard A. Silverman and John Chu. Mathematics and its
  Applications, Vol. 2. Gordon and Breach, Science Publishers, New
  York-London-Paris, 1969.

\bibitem{MR1422251}
Pierre-Louis Lions.
\newblock {\em Mathematical topics in fluid mechanics. {V}ol. 1}, volume~3 of
  {\em Oxford Lecture Series in Mathematics and its Applications}.
\newblock The Clarendon Press, Oxford University Press, New York, 1996.

\bibitem{MN1987}
Jacom~H. Masliyah, Graham Neale, K.~Malysa, and Theodorus~G.M. Van De~Ven.
\newblock {C}reeping flow over a composite sphere: {S}olid core with porous
  shell.
\newblock {\em Chemical Engineering Science}, 42(2):245--253, 1987.

\bibitem{MR0725151}
Vladimir~Gilelevich Maz'ya and Boris Plamenevski{\u \i}.
\newblock The first boundary value problem for classical equations of
  mathematical physics in domains with piecewise-smooth boundaries. {I}.
\newblock {\em Z. Anal. Anwendungen}, 2(4):335--359, 1983.

\bibitem{MR0734895}
Vladimir~Gilelevich Maz'ya and Boris Plamenevski{\u \i}.
\newblock The first boundary value problem for classical equations of
  mathematical physics in domains with piecewise smooth boundaries. {II}.
\newblock {\em Z. Anal. Anwendungen}, 2(6):523--551, 1983.

\bibitem{MR2182091}
Vladimir~Gilelevich Maz'ya and J\"urgen Rossmann.
\newblock Pointwise estimates for {G}reen's kernel of a mixed boundary value
  problem to the {S}tokes system in a polyhedral cone.
\newblock {\em Math. Nachr.}, 278(2005):1766--1810, 2005.

\bibitem{MR2763343}
Dorina Mitrea and Irina Mitrea.
\newblock On the regularity of {G}reen functions in {L}ipschitz domains.
\newblock {\em Comm. Partial Differential Equations}, 36(2):304--327, 2011.

\bibitem{MR3040944}
Katharine~A. Ott and Russell~M. Brown.
\newblock The mixed problem for the {L}am\'{e} system in two dimensions.
\newblock {\em J. Differential Equations}, 254(12):4373--4400, 2013.

\bibitem{MR3320459}
Katharine~A. Ott, Seick Kim, and Russell~Murray Brown.
\newblock The {G}reen function for the mixed problem for the linear {S}tokes
  system in domains in the plane.
\newblock {\em Math. Nachr.}, 288(4):452--464, 2015.

\end{thebibliography}

\end{document}